\newfont{\bb}{msbm10 at 12pt}
\def\r{\hbox{\bb R}}
\def\h{\hbox{\bb H}}
\def\s{\hbox{\bb S}}
\def\mkr{\mathbb{M}^2(\kappa)\times\mathbb{R}}
\def\hr{\mathbb{H}^2\times\mathbb{R}}
\def\m{\mathbb{M}^2}
\def\mk{\mathbb{M}^2(\kappa)}
\def\hm{\mathcal{M}(\kappa , \tau)}
\def\hmf{\hbox{\bb E}(\kappa , \tau)}
\def\amb{\mathcal{M}}
\newcommand{\campo}{\mathfrak{X}}
\newcommand{\camb}{\overline{\nabla}}
\newcommand{\norm}[1]{\left\Vert #1 \right\Vert}
\newcommand{\set}[1]{\left\{#1\right\}}
\newcommand{\meta}[2]{\langle #1,#2 \rangle }
\newcommand{\eps}{\varepsilon}
\newcommand{\ext}{\wedge}
\newcommand{\To}{\longrightarrow }
\begin{document}

\theoremstyle{plain}\newtheorem{lemma}{Lemma}[section]
\theoremstyle{plain}\newtheorem{definition}{Definition}[section]
\theoremstyle{plain}\newtheorem{theorem}{Theorem}[section]
\theoremstyle{plain}\newtheorem{proposition}{Proposition}[section]
\theoremstyle{plain}\newtheorem{remark}{Remark}[section]
\theoremstyle{plain}\newtheorem{corollary}{Corollary}[section]

\begin{center}

{\Large \bf Finite index operators on surfaces} \vspace{0.4cm}

\vspace{0.5cm}

{\large Jos$\acute{\text{e}}$ M. Espinar$\,^\dag$\footnote{The author is partially
supported by Spanish MEC-FEDER Grant MTM2010-19821, and Regional J. Andalucia Grants
P06-FQM-01642 and FQM325}}\\ 
\vspace{0.3cm} 
\end{center}

\vspace{.5cm}

\noindent $\mbox{}^\dag$ Departamento de Geometría y Topología, Universidad de
Granada, 18071 Granada, Spain; e-mail: jespinar@ugr.es\vspace{0.2cm}

\vspace{.3cm}

\begin{abstract}
We consider differential operators $L$ acting on functions on a Riemannian surface, $\Sigma$, of the form
$$L = \Delta + V -a K ,$$where $\Delta$ is the Laplacian of $\Sigma$, $K$ is the Gaussian curvature, $a$ is a positive constant and $V \in C^{\infty}(\Sigma)$. Such operators $L$ arise as the stability operator of $\Sigma$ immersed in a Riemannian three-manifold with constant mean curvature (for particular choices of $V$ and $a$).

We assume $L$ is nonpositive acting on functions compactly supported on $\Sigma$. If the potential, $V:= c + P $ with $c$ a nonnegative constant, verifies either an integrability condition, i.e. $P \in L^1(\Sigma)$ and $P$ is non positive, or a decay condition with respect to a point $p_0 \in \Sigma$, i.e. $|P(q)|\leq M/d(p_0,q)$ (where $d$ is the distance function in $\Sigma$), we control the topology and conformal type of $\Sigma$. Moreover, we establish a {\it Distance Lemma}.

We apply such results to complete oriented stable $H-$surfaces immersed in a Killing submersion. In particular, for stable $H-$surfaces in a simply-connected homogeneous space with $4-$dimensional isometry group, we obtain:
\begin{itemize}
\item There are no complete stable $H-$surfaces $\Sigma \subset \hr$, $H>1/2$, so that either
$K_e^+:={\rm max}\set{0,K_e} \in L^1 (\Sigma)$ or there exist a point $p_0 \in \Sigma$ and a constant $M$ so that $|K_e(q)| \leq M/d(p_0,q)$, here $K_e$ denotes the extrinsic curvature of $\Sigma$.
\item Let $\Sigma \subset \hmf $, $\tau \neq 0$, be an oriented complete stable $H-$surface so that either $\nu^2 \in L^1 (\Sigma)$ and $4H^2 +\kappa \geq 0$, or there exist a point $p_0 \in \Sigma$ and a constant $M$ so that $|\nu (q)|^2 \leq M/d(p_0,q)$ and $4H^2 +\kappa > 0$. Then:
\begin{itemize}
\item In $\s ^3_{Berger}$, there are no such a stable $H-$surface.
\item In ${\rm Nil}_3$, $H=0$ and $\Sigma$ is either a vertical plane (i.e. a vertical cylinder over a
straight line in $\r^2$) or an entire vertical graph.
\item In $\widetilde{{\rm PSL}(2,\r)}$, $H=\sqrt{-\kappa}/2$ and $\Sigma$ is either a vertical
horocylinder (i.e. a vertical cylinder over a horocycle in $\h ^2 (\kappa)$) or an entire graph.
\end{itemize}
\end{itemize}
\end{abstract}

\vspace{.2cm}

{\bf Mathematical Subject Classification (2010):} 53A10, 53C21.

\vspace{.2cm}

{\bf Keywords:} Schr\"{o}ndiger Type Operators; Stable Surfaces; Homogeneous Manifolds.

\section{Introduction}
Stable oriented domains $\Sigma$ on a constant mean curvature surface in a
Riemannian three-manifold $\amb ^3$, are characterized by the \emph{stability inequality}
for normal variations $\psi N$ (see \cite{BdCE})

$$ \int _{\Sigma} \psi ^2 |A|^2 + \int _{\Sigma} \psi ^2 {\rm Ric}_{\mathcal{M}^3} (N,N)
\leq \int_{\Sigma} |\nabla \psi|^2  ,$$for all compactly supported functions $\psi
\in H^{1,2}_c (\Sigma)$. Here $|A|^2$ denotes the square of the length of the second
fundamental form of $\Sigma$, ${\rm Ric}_{\mathcal{M}^3} (N,N)$ is the Ricci
curvature of $\mathcal{M}^3$ in the direction of the normal $N$ to $\Sigma$ and
$\nabla $ is the gradient w.r.t. the induced metric.

One writes the stability inequality in the form
$$ \left.\frac{d^2}{dt^2}\right\vert _{t=0}\left({\rm Area}(\Sigma (t))-2H \, {\rm Vol}(\Sigma (t))\right)=
- \int _{\Sigma} \psi L \psi \geq 0 ,$$where $L$ is the linearized operator of the
mean curvature
$$ L = \Delta + |A|^2 + {\rm Ric}_{\amb} .$$

In terms of $L$, stability means that $-L$ is nonnegative, i.e., all its eigenvalues
are non-negative. $\Sigma$ is said to have finite index if $-L$ has only finitely
many negative eigenvalues.


The study of stable surfaces by considering Schr\"{o}dinger-type differential operators on a
surface $\Sigma$ with a metric $g$ of the form
$$L:= \Delta + V - a K,$$where $\Delta $ and $K$ are the Laplacian and Gaussian curvature
associated to $g$ respectively, $a $ is a positive constant and $V$ is a nonnegative function,  has been received an important number of contributions (see \cite{dCP,FCS,FC,Gu1,Gu2,K,P}), and even now it is a topic of interest. T. Colding and W. Minicozzi \cite{CM} introduced a new technique to study this type of operator
based on the first variation formula for length and the Gauss-Bonnet formula. Using
this technique they obtained an inequality which, when $a>1/2$, gives quadratic area
growth of the geodesic disks on the surface and the integrability of the potential
$V$ at the same time (note that the stability operator can be realized with the
right choice of $V$ and $a$). P. Castillon \cite{Ca} used the ideas of
Colding-Minicozzi to improve their result to $a>1/4$. This technique allows them to
control the topology and the conformal type of the surface. In the above works, the
{\it potential}, $V$, is assumed to be nonnegative. Moreover, an important result
for this kind of operators is the {\it Distance Lemma}, technique developed by
Fischer-Colbrie \cite{FC}, which bounds the intrinsic distance of any point in the
surface to the boundary. This result have been done for $V\geq c$, $c$ a positive
constant, and $a>1/4$ (see \cite{MPR} for a survey). In \cite{ER}, the authors
extended the {\it Distance Lemma} for nonnegative Schr\"{o}dinger-type differential operators satisfying $V \geq c$, $c $ a positive constant, $0< a \leq 1/4$, and assuming some control on
the area growth of the geodesic disks by different methods. Also, they were able to
control the topology of the surface. As we have mentioned, all these results depend
on conditions on the potential $V$ and the constant $a$. Recently,
Manzano-P\'{e}rez-Rodr\'{i}guez \cite{MaPRo} have imposed no condition on the
potential, $\mathcal Q := V - aK \in C^{\infty}(\Sigma)$, but $\Sigma$ is a complete
parabolic\footnote{A Riemannian manifold $\Sigma$ is parabolic if every positive
subharmonic function on $\Sigma$ must be constant.} surface with no boundary, and
they obtained that, if the there exists a nonidentically zero {\it bounded} solution
of $L f =0$, $-L$ nonnegative on $\Sigma$, then $f$ vanishes nowhere and the linear
subspace of such functions is one dimensional.

In this paper, we drop the condition $V \geq 0$ for either the integrability of the potential or some decay at infinity. We will make them explicit. Those conditions allow us to obtain parabolicity of the surface or even a {\it Distance Lemma}.

The above achievements for Schr\"{o}dinger-type operators have been used for proving results for stable $H-$surface in three-manifolds (see \cite{ER,Ma,MPR,MaPRo,R,R2} and references therein). The study of stable $H-$surface in a simply-connected homogeneous three-manifold with a four dimensional isometry group is a topic of increasing interest. These homogeneous
spaces are denoted by $\hmf $, where $\kappa$ and $\tau$ are constant and $\kappa
-4\tau ^2 \neq 0$. They can be classified as $\mkr$ if $\tau = 0$, with $\mk = \s
^2(\kappa)$ if $\kappa >0$ ($\s ^2(\kappa)$ the sphere of curvature $\kappa$), and
$\mk  =  \h ^2(\kappa)$ if $\kappa < 0$ ($\h^2(\kappa)$ the hyperbolic plane of
curvature $\kappa$).  If $\tau$ is not equal to zero, $\hmf $ is a Berger sphere if
$\kappa > 0$, a Heisenberg space if $\kappa = 0$ (of bundle curvature $\tau$), and
the universal cover of ${\rm PSL}(2,\r)$ if $\kappa < 0$.

We apply our results to stable $H-$surfaces immersed in a Riemannian three-manifold
which fiber over a Riemmanian surface and whose fibers are the trajectories of a unit Killing vector field (see \cite{ES,LR,RST} and references therein). In particular, they include the simply-connected homogeneous spaces $\hmf$.

The paper is organized as follows. Section \ref{Sec:Preli} is devoted to establish
the notation and basic concepts. In Section \ref{Sec:Integrable} we study
nonnegative differential operators with {\bf integrable potential}, this means that
$L_{a,c}:= \Delta + V - a K$ satisfies that $V:= c + P $, where $c $ is a
nonnegative constant and $P $ is a nonpositive and integrable function on $\Sigma$,
i.e., $P \in L^1(\Sigma)$ (see Definition \ref{Def:integrablepotential}).

When $a>1/4$, for this kind of operators we get:

\begin{quote}
{\bf Theorem \ref{Theo:QAGintegrable}.} {\it Let $\Sigma $ be a Riemannian surface possibly with boundary. Suppose that $L_{a,c} = \Delta + V - a K$ is nonpositive acting on $f\in C^\infty _0(\Sigma)$ and has integrable potential with $c \geq 0 $ and $a>1/4$. Then, $\Sigma$ has Quadratic Area Bound (the area bound depending only on $a$, $c$ and $\norm{P}_1$).

Moreover, if we assume $\Sigma$ is complete (without boundary), $\Sigma$ is
conformally equivalent to a compact Riemann surface with a finite number of points
removed.}
\end{quote}

And, in the case $c>0$, we can go further. First, we shall introduce a concept to understand correctly the next theorem. Let $\Sigma $ be a Riemannian surface with boundary $\partial \Sigma$, we say that the area of the geodesic disks goes to infinity as its radius goes to infinity if for any point $p \in \Sigma$ and any $s >0$ so that $\overline{D(p , s)} \cap \partial \Sigma = \emptyset$, where $D(p,s)$ is the geodesic disk in $\Sigma$ centered at $p$ and radius $s$, the function 
$$ a(p,s) := {\rm Area}(D(p, s)) ,$$goes to infinity if $s$ goes to infinity. Now, we are ready for establishing:

\begin{quote}
{\bf Theorem \ref{Theo:distancegeq}.} {\it Let $\Sigma $ be a Riemannian surface
possibly with boundary. Suppose that $L_{a,c} = \Delta + V - a K$ is nonpositive, has integrable potential with $c >0 $ and $a>1/4$. Then, if the area of the
geodesic disks goes to infinity as its radius goes to infinity, there exists a
positive constant $C$ such that
$$ {\rm dist}_{\Sigma}(p , \partial \Sigma) \leq C , \, \, \forall p \in \Sigma .$$

In particular, if $\Sigma$ is complete with $\partial \Sigma = \emptyset$, then it must be either  compact or parabolic with finite area.  Moreover, when $\Sigma$ is compact, it holds
$$ c \, A(\Sigma) - \norm{P}_1 \leq 2a \pi \, \chi (\Sigma) ,$$where $A(\Sigma)$ and $\chi (\Sigma)$ denote the area and Euler characteristic of $\Sigma$ respectively.}
\end{quote}

When $0< a \leq 1/4$, we obtain:

\begin{quote}
{\bf Theorem \ref{Theo:distance}.} {\it Let $\Sigma $ be a Riemannian surface with
$k-AAB$ (see Definition \ref{Def:kAAG}) and possibly with boundary. Suppose that $L_{a,c} = \Delta + V - a K$ is nonpositive, has integrable potential with $c >0 $ and $0< a \leq 1/4$. Then, there exists a positive constant $C$ such that
$$ {\rm dist}_{\Sigma}(p , \partial \Sigma) \leq C , \, \, \forall p \in \Sigma .$$

In particular, if $\Sigma$ is complete with $\partial \Sigma = \emptyset$ then it
must be compact.  Moreover, it holds
$$ c \, A(\Sigma) - \norm{P}_1 \leq 2a \pi \, \chi (\Sigma) ,$$where $A(\Sigma)$ and $\chi (\Sigma)$ denote the area and Euler characteristic of $\Sigma$ respectively.}
\end{quote}

In Section \ref{Sec:Decay}, we impose that the potential has linear decay with respect some point, specifically, $L_{a,c} := \Delta + V - a K$ has {\bf linear decay} if $V:= c + P$, where $c $ is a nonnegative constant and $P $ satisfies
\begin{equation*}
|P(q)|\, \leq M/d(p_0,q) , 
\end{equation*}for some point $p_0\in \Sigma$, where $M$ is a nonnegative constant (see
Definition \ref{Def:QDecay}). We prove

\begin{quote}
{\bf Theorem \ref{Theo:QDC}.} {\it Let $\Sigma $ be a complete Riemannian surface without boundary. Suppose that $L_{a,c} = \Delta + V - a K$ is nonpositive acting on $f\in C^\infty _0(\Sigma)$, has linear decay with $c > 0 $ and $a>1/4$. Then, $\Sigma$ is compact.}
\end{quote}

Next, in Section \ref{Sec:stable}, we apply these abstract results to stable $H-$surfaces immersed in a Riemannian three-manifold. In particular, for stable $H-$surfaces in a
Killing submersion

\begin{quote}
{\bf Theorem \ref{Theo:stablehm}.} {\it Let $\Sigma $ be a complete oriented $H-$surface with finite index immersed in $\hm$, $\hm$ an orientable Killing submersion of bounded geometry so that $4H^2 +c(\Sigma)\geq 0$, where
\begin{equation*}
c(\Sigma) := {\rm inf}\set{\kappa (\pi (p)) : \, p \in \Sigma} .
\end{equation*}

Set
\begin{eqnarray*}
P^- &:=& {\rm min}\set{0 , -(K_e +\tau^2) } , \\
P^+ &:=& {\rm max}\set{0 , -(K_e +\tau^2)  }.
\end{eqnarray*}

Assume $P^-\in L^1(\Sigma)$. Then, one of the following statements hold:
\begin{itemize}
\item $\Sigma$ is a minimal graph with $\pi(\Sigma)=\m$ and $c(\Sigma)>0$,
\item $4H^2 + c(\Sigma)=0$ and $\Sigma$ is either a vertical multigraph or a vertical cylinder of geodesic curvature $2H$ in $\m$.
\end{itemize}

}
\end{quote}

And

\begin{quote}
{\bf Theorem \ref{Theo:stablehmQCD}.} {\it Let $\Sigma $ be a complete oriented stable $H-$surface in $\hm$, $\hm$ an orientable Killing submersion of bounded geometry so that $4H^2 +c(\Sigma)\geq 0$, where
\begin{equation*}
c(\Sigma) := {\rm inf}\set{\kappa (\pi (p)) : \, p \in \Sigma} .
\end{equation*}

Set $P := K_e +\tau ^2$ and assume there exist a point $p_0 \in \Sigma$ and a constant $M>0$ so that
\begin{equation*}
|P(q)| \leq M/d(p_0,q).
\end{equation*}

Then, $\Sigma$ is a minimal graph with $\pi(\Sigma)=\m$ and $c(\Sigma) >0$.}
\end{quote}

But, if we restrict the above results to the three-dimensional simply-connected homogeneous spaces, we obtain:

\begin{quote}
{\bf Corollary \ref{Cor:hr}.} {\it Let $\Sigma \subset \hr$ be an oriented complete stable $H-$surface satisfying one of the following conditions:
\begin{itemize}
\item $H\geq 1/2$ and ${\rm max}\set{0,K_e} \in L^1 (\Sigma)$,
\item $H>1/2$ and there exists a point $p_0 \in \Sigma$ and a constant $M>0$ so that
\begin{equation*}
|K_e (q)| \leq M/d(p_0,q).
\end{equation*}
\end{itemize}

Then, $H=1/2$ and $\Sigma$ is either a vertical horocylinder (i.e. a vertical
cylinder over a horocycle in $\h^2$) or an entire vertical graph.}
\end{quote}

\begin{quote}
{\bf Corollary \ref{Cor:homog}.} {\it Let $\Sigma \subset \hmf $, $\tau \neq 0$, be an oriented complete stable
$H-$surface satisfying one of the following conditions:
\begin{itemize}
\item $4H^2 +\kappa \geq 0$ and $\nu^2 \in L^1 (\Sigma)$,
\item $4H^2 +\kappa > 0$ and there exist a point $p_0 \in \Sigma$ and a constant $M>0$ so that
\begin{equation*}
|\nu (p)|^2 \leq M/d(p_0,q).
\end{equation*}
\end{itemize}

Then:
\begin{itemize}
\item In $\s ^3_{Berger}$, there are no such a stable $H-$surface.
\item In ${\rm Nil}_3$, $H=0$ and $\Sigma$ is either a vertical plane (i.e. a vertical cylinder over a
straight line in $\r^2$) or an entire vertical graph.
\item In $\widetilde{{\rm PSL}(2,\r)}$, $H=\sqrt{-\kappa}/2$ and $\Sigma$ is either a vertical
horocylinder (i.e. a vertical cylinder over a horocycle in $\h ^2 (\kappa)$) or an entire graph.
\end{itemize}}
\end{quote}

Finally, in the Appendix, we have compiled a sort of results we will make use along
this paper for the sake of completeness.

\section{Notation and preliminary results}\label{Sec:Preli}

Throughout this work, we denote by $\Sigma$ a connected Riemannian surface, with
Riemannian metric $g$, and possibly with boundary $\partial \Sigma$. Let $p \in
\Sigma$ be a point of the surface and $D(p,s)$, for $s>0$, denote the geodesic
disk centered at $p$ of radius $s$. We assume that $\overline{D(p ,s)} \cap
\partial \Sigma = \emptyset$. Moreover, let $r$ be the radial distance of a point
$q$ in $D(p, s)$ to $p$. We write $D(s)=D(p ,s)$ if we can omit the dependence on $p$. We also denote
\begin{eqnarray*}
l(s) &:=& {\rm Length}(\partial D(s)) \\
a(s) &:=& {\rm Area}(D(s))\\
K(s) &:=& \int _{D(s)} K \\
\chi (s)&:=& \text{Euler characteristic of } D(s) ,
\end{eqnarray*}where $K$ is the Gaussian curvature associated to the metric $g$. In the case we can not drop the dependence on $p$, we write $l(p,s)$, $a(p,s)$, $K(p,s)$ and $\chi (p,s)$ respectively.

Let
$$L:= \Delta + V - a K$$be a differential operator on $\Sigma$ acting on piecewise
smooth functions with compact support, i.e. $f \in C^\infty _0 (\Sigma)$, where $a
>0$ is a constant, $V \in C^{\infty}(\Sigma)$ and $\Delta$ is the Laplacian operator
associated to the metric $g$.

The index form of these kind of operators is
\begin{equation}\label{varcharacsurface}
I(f)=\int _{\Sigma } \left\{ \norm{\nabla f}^2 - V f^2+ a K f^2 \right\} ,
\end{equation}where $\nabla $ and $\| \cdot \|$ are the gradient and norm associated
to the metric $g$. One has
$$\int_{\Sigma} f L f = -I (f) .$$

Thus, the nonpositivity of $L$ implies that the quadratic form, $I(f)$, associated to $L$ is nonnegative on compactly supported functions, i.e., $I(f) \geq 0$. In this case, we will say that $-L$ is {\bf stable}.

$-L$ is stable if all the eigenvalues of $-L$ are nonnegative. $-L$ has {\bf finite
index} if has only finitely many negative eigenvalues. In this case, it is well
known that there exists a compact set $K\subset \Sigma$ so that $-L$ is nonnegative
acting on $f \in C^{\infty}_0(\Sigma \setminus K)$ (see \cite{FC}).

We recall now some topological concepts. For a compact surface $\Sigma$, its Euler
characteristic is given by $\chi (\Sigma) := 2(1-g_{\Sigma})-n_{\Sigma}$, where
$g_{\Sigma}$ and $n_{\Sigma}$ denote its genus and the number of connected
components of its boundary respectively.

A noncompact surface $\Sigma$ is said to be of finite topology if there exists a
compact surface $\tilde \Sigma$ without boundary and a finite number of pairwise
disjoint closed disks $D_i \subset \tilde \Sigma$, $i=1, \ldots , n$, so that
$\Sigma$ is homeomorphic to $\tilde \Sigma \setminus \bigcup _{i=1}^n D_i$. In this
case, the Euler characteristic of $\Sigma$ is $\chi (\Sigma)= 2(1-g_{\tilde
\Sigma})-n$.

Moreover, we will need the following topological result (see \cite[Lemma 1.4]{Ca})

\begin{lemma}\label{Lem:Castillon}
Let $\Sigma$ be a complete Riemannian surface.
\begin{itemize}
\item If $\Sigma$ is of finite topology, then there exists $s_0$ such that for all
$s \geq s_0$ we have $\chi (s) \leq \chi (\Sigma)$.
\item If $\Sigma$ is not of finite topology then $\lim _{s \to +\infty}\chi
(s)=-\infty$.
\end{itemize}
\end{lemma}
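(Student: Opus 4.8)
The plan is to prove both items by a purely topological argument (as in \cite[Lemma 1.4]{Ca}), exploiting that the geodesic disks exhaust $\Sigma$. First I would dispose of the compact case: if $\Sigma$ is compact then $D(s)=\Sigma$ for all $s\geq{\rm diam}(\Sigma)$, so $\chi(s)=\chi(\Sigma)$, and a compact surface has finite topology; so from now on assume $\Sigma$ is noncompact. By Hopf--Rinow the $D(s)$ are compact with $\bigcup_{s>0}D(s)=\Sigma$, each $D(s)$ is connected (every $q\in D(s)$ is joined to $p$ by a minimizing geodesic, which stays in $D(s)$), and $D(s)\subsetneq\Sigma$, so $\partial D(s)\neq\emptyset$. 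By Sard's theorem, for all $s$ outside a null set --- the \emph{regular} values --- $\partial D(s)$ is an embedded $1$--submanifold and $D(s)$ is a compact surface with nonempty boundary, so $H_0(D(s);\mathbb{Q})=\mathbb{Q}$, $H_2(D(s);\mathbb{Q})=0$, and
\[ \chi(s)=\chi(D(s))=1-b_1(D(s)),\qquad b_1(D(s)):={\rm rank}\,H_1(D(s);\mathbb{Q}). \]

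Next I would introduce the monotone quantity
\[ r(s):={\rm rank}\Big({\rm Im}\big(H_1(D(s);\mathbb{Q})\to H_1(\Sigma;\mathbb{Q})\big)\Big)\;\leq\;b_1(D(s)), \]
the map being induced by inclusion. Two facts drive the proof: (a) $r$ is non-decreasing in $s$, since for $s\leq s'$ the inclusion $D(s)\hookrightarrow\Sigma$ factors through $D(s')$, so the image coming from $D(s)$ is contained in the one coming from $D(s')$; and (b) every singular $1$--cycle of $\Sigma$ has compact support, hence lies in some $D(s)$, so $H_1(\Sigma;\mathbb{Q})=\varinjlim_s H_1(D(s);\mathbb{Q})$ and $\sup_s r(s)={\rm rank}\,H_1(\Sigma;\mathbb{Q})$.

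Now I would split on whether $H_1(\Sigma;\mathbb{Z})$ is finitely generated. If $\Sigma$ has finite topology, then writing $\Sigma\cong\tilde\Sigma\setminus\bigcup_{i=1}^nD_i$ (with $n\geq1$ and $\tilde\Sigma$ closed of genus $g$) we have $H_1(\Sigma;\mathbb{Z})$ free of rank $m:=2g+n-1=1-\chi(\Sigma)$; choosing finitely many $1$--cycles generating $H_1(\Sigma;\mathbb{Q})$ and $s_0$ with $D(s_0)$ containing all of them, we obtain $r(s)=m$ for $s\geq s_0$, hence $\chi(s)=1-b_1(D(s))\leq1-r(s)=\chi(\Sigma)$ for every regular $s\geq s_0$. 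If $\Sigma$ does not have finite topology, then $H_1(\Sigma;\mathbb{Z})$ is not finitely generated; being free abelian it has infinite rank, so $\sup_s r(s)=+\infty$, and monotonicity forces $r(s)\to+\infty$; hence $\chi(s)=1-b_1(D(s))\leq1-r(s)\to-\infty$.

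The step I expect to cost real work is the interface with the Riemannian and point-set side: making the identity $\chi(s)=1-b_1(D(s))$ valid for \emph{all} large $s$ (regular or not) --- for this one checks that $D(s)$ is always a compact subset of the connected noncompact surface $\Sigma$, so $H_2(D(s))=0$, and that it has the homotopy type of a finite complex --- and invoking the equivalence, for surfaces, between having finite topology and $H_1$ being finitely generated; this last rests on the fact that a noncompact surface (or one with boundary) is homotopy equivalent to a graph, so has free fundamental group, together with the classification of finite-type surfaces. Granting these, everything else is bookkeeping with the exhaustion $\{D(s)\}$.
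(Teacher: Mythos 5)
The paper does not actually prove this lemma: it is quoted verbatim from Castillon (\cite[Lemma 1.4]{Ca}), so there is no internal proof to compare against. That said, your homological scheme --- the non-decreasing rank $r(s)$ of the image of $H_1(D(s);\mathbb{Q})\to H_1(\Sigma;\mathbb{Q})$, combined with $\chi(s)=1-b_1(D(s))$ for a compact connected subsurface with nonempty boundary and the direct-limit description $H_1(\Sigma;\mathbb{Q})=\varinjlim_s H_1(D(s);\mathbb{Q})$ --- is a sound skeleton, and the bookkeeping in both cases is correct: $1-r(s)$ is an upper bound for $\chi(s)$ that stabilizes at $\chi(\Sigma)$ in the finite-topology case (using $\mathrm{rank}\,H_1=1-\chi$ for a noncompact finite-type surface) and tends to $-\infty$ otherwise. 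This is essentially the same monotonicity-under-exhaustion idea that drives Castillon's argument.

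The genuine gap is in the foundational regularity step. The distance function $d(p,\cdot)$ is only Lipschitz and fails to be differentiable on the cut locus of $p$, so Sard's theorem does not apply and ``regular values'' are not defined for it. The fact you need --- that for all but a null (in fact countable) set of radii $s$ the level set $\partial D(s)$ is a finite disjoint union of piecewise smooth circles, so that $D(s)$ is a compact surface with boundary --- is a nontrivial classical theorem on the structure of distance circles on complete surfaces (Fiala for analytic metrics, Hartman for $C^2$ metrics, Shiohama--Tanaka in general), and citing it is the correct repair. Relatedly, your upgrade from ``a.e.\ $s$'' to ``all $s\geq s_0$'' rests on the unproved assertion that every closed geodesic ball has the homotopy type of a finite complex; for exceptional radii the boundary can be wild, and this is precisely the hard point rather than a routine check. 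The gap is repairable (and is harmless for this paper, since $\chi(s)$ only ever enters through the integrals $G(s)=-\int (f^2)'\chi$, for which the almost-everywhere version of the lemma suffices), but as written the proof is incomplete at exactly the step you flagged.
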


\section{Nonpositive operators with integrable potential}\label{Sec:Integrable}

In this Section we study differential operators with integrable potential on a
Riemannian surface. First, we make explicit the kind of differential operators we
are interested on:

\begin{definition}\label{Def:integrablepotential}
Let $\Sigma$ be a Riemannian surface. We say that $L_{a,c} = \Delta + V - a K$ has
{\bf integrable potential} if $L_{a,c}$ is a differential operator on $\Sigma$
acting on piecewise smooth functions with compact support, i.e. $f \in C^{\infty}_0
(\Sigma)$, where $a > 0$ is constant, $\Delta$ and $K$ are the Laplacian and Gauss
curvature associated to the metric $g$ respectively. Moreover, we will assume that
$V:= c + P $, where $c $ is a nonnegative constant and $P $ is a nonpositive and
integrable function on $\Sigma$, i.e., $P \in L^1(\Sigma)$ or, equivalently, $\norm{P}_1 < + \infty$, where $\norm{\cdot } _1$ denotes the $L^1-$norm.
\end{definition}

We will use the following condition on the area growth of $\Sigma$.

\begin{definition}\label{Def:AAG}
Let $\Sigma$ be a Riemannian surface possibly with boundary. We say that $\Sigma$ has Asymptotic Area Bound of degree $k$ ($k-$AAB) if there exist positive constants $k ,C \in \r ^+ $ such that for any point $p \in \Sigma$ and any $s >0$ so that $\overline{D(p , s)} \cap \partial \Sigma = \emptyset$, the function 
$$ \frac{a (p, s) }{s^k}:= \frac{{\rm Area}(D(p, s))}{s^k} \to C \text{ if } s \to + \infty .$$

If $\Sigma$ is complete without boundary and for some point $p \in \Sigma $ verifies
$$ \lim _{s\To \infty}\frac{{\rm Area} ( D(p,s))}{s^k} = C  ,$$we say that $\Sigma$ has Asymptotic Area Growth of degree $k$ ($k-$AAG). Note that, by the Triangle Inequality, this condition does not depend on the point $p$.
\end{definition}

When $-L$ is nonnegative, $a>1/4$ and $V \geq 0$, it is known that $\Sigma$ has quadratic area growth and the integrability of the potential (see \cite{Ca}, \cite{MPR} for $a>1/4$ or
\cite{CM}, \cite{R} for $a>1/2$). When $a \leq 1/4$, $V \geq 0$ and we assume that
$\Sigma$ has asymptotic area growth of degree $k$, we can obtain similar results
(see \cite{ER}). 

One of the most interesting consequence under the above conditions is that one can
bound the distance of any point to the boundary, this is known as the \emph{Distance
Lemma} (see \cite{MPR} or \cite{ER}). the {\it Distance Lemma} allows us to conclude
that, if $\Sigma$ is complete, it is compact.

We distinguish two cases depending on the value of $a$. We start when $a > 1/4$, and
first we will prove that the surface has Quadratic Area Bound.

\begin{definition}\label{Def:QAG}
Let $\Sigma$ be a Riemannian surface possibly with boundary. We say that $\Sigma$ has Quadratic Area Bound if there exists a positive constant $C$ so that for any point $p \in \Sigma$ and any $s >0$ so that $\overline{D(p , s)} \cap \partial \Sigma = \emptyset$, the function 
$$ a(p,s/2) \leq C s^2 .$$

If $\Sigma$ is complete without boundary and for some point $p \in \Sigma $ verifies
$$ a(p,s) \leq C s^2, \, \text{ for all } s>0,$$we say that $\Sigma$ has Quadratic Area Growth. Note that, by the Triangle Inequality, this condition does not depend on the point $p$.
\end{definition}

Actually, this will give us more information about the topology and conformal type
of the surface.

\begin{theorem}\label{Theo:QAGintegrable}
Let $\Sigma $ be a Riemannian surface
possibly with boundary. Suppose that $L_{a,c} = \Delta + V - a K$ is nonpositive acting on $f\in C^\infty _0(\Sigma)$, has integrable potential with $c \geq 0 $ and $a>1/4$. Then, $\Sigma$ has Quadratic Area Bound (the area bound depending only on $a$, $c$ and $\norm{P}_1$).

Moreover, if we assume $\Sigma$ is complete (without boundary), $\Sigma$ is conformally equivalent to a compact Riemann surface with a finite number of points removed.
\end{theorem}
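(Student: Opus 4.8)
The plan is to follow the Colding--Minicozzi / Castillon line of argument, but carrying the nonpositive part $P$ of the potential as an error term controlled by $\norm{P}_1$. The starting point is the stability inequality $I(f)\geq 0$ for $f\in C^\infty_0(\Sigma)$, which reads $\int_\Sigma \norm{\nabla f}^2 \geq \int_\Sigma Vf^2 - a\int_\Sigma Kf^2 = c\int_\Sigma f^2 + \int_\Sigma Pf^2 - a\int_\Sigma Kf^2$. First I would fix a point $p$ with $\overline{D(p,s)}\cap\partial\Sigma=\emptyset$ and plug in a radial test function $f=\varphi(r)$, where $r$ is the distance to $p$, with $\varphi$ supported in $D(s)$ and equal to $1$ on $D(s/2)$ (the standard logarithmic/affine cutoff used by Castillon to reach the range $a>1/4$). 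Using $\norm{\nabla f}=|\varphi'(r)|$, the coarea formula to write volume integrals as $\int_0^s(\cdot)\,l(r)\,dr$, and the first variation of length $l'(r)\leq \chi(r)$-type inequality together with Gauss--Bonnet $\int_{D(r)}K = 2\pi\chi(r)-\int_{\partial D(r)}k_g$, the term $-a\int Kf^2$ is handled exactly as in \cite{Ca}; the new term $\int_\Sigma Pf^2$ is simply bounded below by $-\int_\Sigma |P| f^2 \geq -\norm{P}_1$ since $0\leq f\leq 1$. Since $c\geq 0$, the term $c\int f^2$ is nonnegative and can be discarded (it only helps). The outcome, after optimizing the cutoff, should be an inequality of the shape $a(p,s/2)\leq C(a)\big(\norm{P}_1 + \text{(boundary/Euler terms that are absorbed)}\big)s^2$, i.e. Quadratic Area Bound with the constant depending only on $a$, $c$, $\norm{P}_1$.

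The second, now standard, step uses this area bound to control topology and conformal type when $\Sigma$ is complete and $\partial\Sigma=\emptyset$. Quadratic area growth forces, via Lemma \ref{Lem:Castillon} (combined with Gauss--Bonnet and the integrability input), that $\Sigma$ has finite topology: if it did not, $\chi(s)\to-\infty$, and feeding $\chi(s)$ very negative back into the Gauss--Bonnet control of $\int_{D(s)}K$ and then into the stability inequality would contradict the quadratic bound on $a(s)$. Once finite topology is in hand, I would invoke the Huber--type conclusion (or the argument via \cite{Ca} that quadratic area growth plus finite topology yields conformal equivalence to a compact Riemann surface minus finitely many points); concretely, each end has quadratic area growth and finite total curvature, hence is conformally a punctured disk, and gluing these to the compact core gives the stated conformal model.

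The main obstacle I anticipate is bookkeeping the sign of the curvature term through the cutoff optimization: in the classical setting $V\geq 0$ lets one throw away the potential entirely, whereas here I must keep $c\int f^2\geq 0$ (harmless) but genuinely track $-\norm{P}_1$ as a finite additive constant and verify it does not degrade the $s^2$ scaling — it does not, precisely because $f$ is bounded by $1$, so $\int Pf^2$ contributes an $O(1)$, not $O(s^2)$, term. The only delicate point is ensuring the Castillon cutoff (which is chosen to make the $a>1/4$ borderline work) still closes when this extra constant is present; this should be automatic since the constant is absorbed on the left side's lower-order terms. A secondary technical care is that the coarea and first-variation arguments require $r$ to be smooth a.e. and $D(s)\subset\subset\Sigma\setminus\partial\Sigma$, which is exactly the hypothesis $\overline{D(p,s)}\cap\partial\Sigma=\emptyset$ built into the statement, so no further regularity discussion is needed.
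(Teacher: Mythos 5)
Your proposal is correct and follows essentially the same route as the paper: the paper simply invokes the pre-packaged Meeks--P\'erez--Ros inequality (Corollary \ref{Cor:ageq14}, i.e.\ the Colding--Minicozzi/Castillon computation with the test function $(1-r/s)^b$, $b\geq 1$ chosen so that $b(b(1-4a)+2a)<0$), discards the $c$-term, and bounds the $P$-term by $\norm{P}_1$ using $0\leq f\leq 1$, exactly as you describe. The finite-topology and conformal-type step is likewise the same contradiction argument via Lemma \ref{Lem:Castillon} combined with quadratic area growth.
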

\begin{proof}
Since $a >1/4$, take $b \geq 1 $ in \eqref{MPRequation} so that
$$ - \alpha := b(b(1-4a)+2a) <0 .$$

Thus, applying Corollary \ref{Cor:ageq14}, we obtain
\begin{equation}\label{th1:eq1}
\frac{\alpha}{s^2}\int _0 ^s (1-r/s)^{2b-2}l(r) \leq 2a \pi - \int _{D(s)}
(1-r/s)^{2b} P ,
\end{equation}where $P$ is a nonpositive integrable function, and so
$$ - \int _{D(s)} (1-r/s)^{2b} P \leq C  \, \text{ for all } \, s >0, $$where $C$ is some positive constant depending only on $\norm{P}_1$. Hence, since
$$ \frac{\alpha}{s^2}\int _0 ^s (1-r/s)^{2b-2}l(r) \geq \frac{\alpha}{2^{2b-2}s^2} a(s/2) ,
$$we obtain, inserting the above inequalities in \eqref{th1:eq1},
\begin{equation*}
\frac{\alpha}{2^{2b-2}s^2}a(s/2) \leq 2a \pi + C ,
\end{equation*}then
$$ a(s/2) \leq \tilde C s ^2 ,$$where $\tilde C$ is some positive constant depending only on $a$, $c$ and $\norm{P}_1$. This holds for any $p \in \Sigma \setminus \partial \Sigma$ and hence, $\Sigma $ has Quadratic Area Bound.

Now, we assume $\Sigma$ is complete. Take $b \geq 1$ so that $-\alpha :=
b(b(1-4a)+2a)<0$. Then, applying Corollary \ref{Cor:ageq14}, we have
\begin{equation}\label{th1:eq2}
0 \leq \frac{\alpha}{s^2}\int _0 ^s (1-r/s)^{2b-2}l(r) \leq 2a \pi G(s)- \int
_{D(s)} (1-r/s)^{2b} P \leq 2 a \pi G(s) + C,
\end{equation}where $C $ is some nonnegative constant depending only on $\norm{P}_1$.

Let us prove first that $\Sigma$ has finite topology. Assume $\Sigma $ has infinite topology, then
$$ \liminf _{s\to + \infty} \chi (s) = -\infty ,$$and hence, there exists $s_0$ so that for
all $s \geq s_0$, we have $\chi (s) \leq - M $, where $M:=\frac{C+1}{2a\pi}$. Therefore
\begin{equation*}
\begin{split}
G(s) & = - \int _0 ^s (f(r)^2)' \chi (r) = - \int _0 ^{s_0} (f(r)^2)' \chi (r)- \int
_{s_0} ^s (f(r)^2)' \chi (r)\\
 & \leq - \int _0 ^{s_0} (f(r)^2)' + M \int _{s_0} ^s (f(r)^2)' =  -\left( f(s_0) ^2 - f(0)^2\right) +
 M \left( f(s)^2 - f(s_0)^2 \right) \\
 &= - (M+1) f(s_0)^2 +1 = -(M+1)\left( 1-s_0/s\right)^{2b} +1 ,
\end{split}
\end{equation*}and, letting $s\to + \infty$ in the above expression, we can see that
$$ \lim _{s\to +\infty} G(s) \leq - M ,$$and so
$$ \lim_{s\to + \infty} 2a \pi G(s) + C \leq -1 ,$$which contradicts \eqref{th1:eq2}.
Therefore, $\Sigma$ has finite topology.

Thus, since $\Sigma$ has finite topology and QAG, $\Sigma$ is conformally
equivalent to a compact Riemann surface with a finite number of points removed.
\end{proof}

For the next result recall that, for a Riemannian surface $\Sigma $ with boundary $\partial \Sigma$, we say that the area of the geodesic disks goes to infinity as its radius goes to infinity if for any point $p \in \Sigma$ and any $s >0$ so that $\overline{D(p , s)} \cap \partial \Sigma = \emptyset$, where $D(p,s)$ is the geodesic disk in $\Sigma$ centered at $p$ and radius $s$, the function 
$$ a(p,s) := {\rm Area}(D(p, s)) ,$$goes to infinity if $s$ goes to infinity. Now, we can prove

\begin{theorem}\label{Theo:distancegeq}
Let $\Sigma $ be a Riemannian surface possibly with boundary. Suppose that $L_{a,c} = \Delta + V - a K$ is nonpositive, has integrable potential, $V:=P+c$, with $c >0 $ and $a>1/4$. Then, if the area of the geodesic disks goes to infinity as its radius goes to infinity, there exists a positive constant $C$ (depending only on $a$, $c$ and $\norm{P}_1$) such that
$$ {\rm dist}_{\Sigma}(p , \partial \Sigma) \leq C , \, \, \forall p \in \Sigma .$$

In particular, if $\Sigma$ is complete with $\partial \Sigma = \emptyset$, then it must be either compact or parabolic with finite area. Moreover, when $\Sigma$ is compact, it holds
$$ c \, A(\Sigma) - \norm{P}_1 \leq 2a \pi \, \chi (\Sigma) ,$$where $A(\Sigma)$ and $\chi (\Sigma)$ denote the area and Euler characteristic of $\Sigma$ respectively.
\end{theorem}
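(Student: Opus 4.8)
The plan is to start from the fundamental inequality produced by Corollary \ref{Cor:ageq14}, exactly as in the proof of Theorem \ref{Theo:QAGintegrable}, but now exploiting that $c>0$. Since $a>1/4$, fix $b\geq 1$ with $-\alpha:=b(b(1-4a)+2a)<0$. Writing $V=c+P$, the quantity $\int_{D(s)}(1-r/s)^{2b}V$ splits as $c\int_{D(s)}(1-r/s)^{2b}+\int_{D(s)}(1-r/s)^{2b}P$, and since $P$ is nonpositive and integrable the last term is bounded below by $-\norm{P}_1$. Thus Corollary \ref{Cor:ageq14} yields, for every geodesic disk $D(s)=D(p,s)$ with $\overline{D(p,s)}\cap\partial\Sigma=\emptyset$,
\begin{equation*}
c\int_{D(s)}\!\!(1-r/s)^{2b}\;\leq\;2a\pi\,G(s)+\norm{P}_1,
\end{equation*}
where $G(s)$ is the weighted Euler-characteristic term appearing in \eqref{th1:eq2}. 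The point is that the left side is a genuinely growing quantity once we know the area of geodesic disks tends to infinity: by a co-area / layer-cake estimate, $\int_{D(s)}(1-r/s)^{2b}\geq \int_{D(s/2)}(1-r/s)^{2b}\geq 2^{-2b}a(s/2)$, which goes to $+\infty$ with $s$ by hypothesis. On the other hand, as in Theorem \ref{Theo:QAGintegrable}, finite topology of $\Sigma$ forces $G(s)$ to stay bounded above (using Lemma \ref{Lem:Castillon} and $f(r)^2=(1-r/s)^{2b}$), while infinite topology would already contradict the displayed inequality since the right side would tend to $-\infty$ while the left side stays $\geq 0$. Hence $\Sigma$ has finite topology and $G(s)$ is bounded above by a constant depending only on $a$, $c$, $\norm{P}_1$. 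Combining, $c\cdot 2^{-2b}a(s/2)\leq 2a\pi\,G(s)+\norm{P}_1\leq \text{const}$, so $a(s/2)$ is bounded — but it also tends to infinity by hypothesis, a contradiction — \emph{unless no such arbitrarily large disk exists}. That is exactly the statement that for every $p\in\Sigma$ the disks $D(p,s)$ must hit $\partial\Sigma$ once $s$ exceeds some universal $C$; i.e. $\operatorname{dist}_\Sigma(p,\partial\Sigma)\leq C$ with $C$ depending only on $a$, $c$, $\norm{P}_1$.

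For the ``in particular'' clause, suppose $\Sigma$ is complete with $\partial\Sigma=\emptyset$. If the area of geodesic disks does go to infinity with the radius, the argument above applies vacuously-or-contradictorily: there is no boundary to hit, so the only way to avoid the contradiction is that the hypothesis ``area goes to infinity'' fails, i.e. $a(p,s)$ stays bounded as $s\to\infty$. A complete surface whose geodesic disks have uniformly bounded area is either compact, or noncompact with finite total area; and a complete noncompact surface of finite area is parabolic (every bounded-area complete surface is parabolic — e.g. it cannot carry a positive nonconstant bounded subharmonic function, or invoke the standard area-growth criterion for parabolicity). This gives the dichotomy: $\Sigma$ compact, or parabolic with finite area.

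Finally, when $\Sigma$ is compact we argue directly with $\psi\equiv 1$ (or pass to the limit $s\to\infty$ in the weighted estimate, where now there is no boundary and the weights can be removed). Stability $I(1)\geq 0$ reads $\int_\Sigma(aK-V)\geq 0$, i.e. $a\int_\Sigma K\geq \int_\Sigma V=c\,A(\Sigma)+\int_\Sigma P$. By Gauss--Bonnet $\int_\Sigma K=2\pi\chi(\Sigma)$, and since $P$ is nonpositive, $\int_\Sigma P\geq -\norm{P}_1$, hence $2a\pi\,\chi(\Sigma)\geq c\,A(\Sigma)-\norm{P}_1$, which is the asserted inequality.

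The main obstacle I expect is the bookkeeping in the first paragraph: one must be careful that the upper bound on $G(s)$ (coming from finite topology via Lemma \ref{Lem:Castillon}) is genuinely \emph{uniform} — independent of $p$ and of the disk — so that the resulting bound $C$ on $\operatorname{dist}_\Sigma(p,\partial\Sigma)$ depends only on $a$, $c$, $\norm{P}_1$ and not on $p$; this requires re-running the finite-topology dichotomy locally around each $p$ and checking the threshold $s_0$ can be controlled by the same constants, much as in the proof of the classical Distance Lemma.
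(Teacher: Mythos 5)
Your argument is essentially the paper's: apply Corollary \ref{Cor:ageq14} with $b$ chosen so that $b(b(1-4a)+2a)<0$, split $V=c+P$, absorb $\int(1-r/s)^{2b}P\geq-\norm{P}_1$, lower-bound the weighted area by $2^{-2b}a(s/2)$, and contradict the hypothesis that areas of boundary-avoiding disks blow up; the compact case via $\psi\equiv 1$ and Gauss--Bonnet is also exactly what the paper does. The one place you deviate --- and the one you flag as the ``main obstacle'' --- is the bounding of $G(s)$, and there the detour is both unnecessary and, as written, would not deliver the theorem. The estimate $G(s)\leq 1$ is already part of the statement of Corollary \ref{Cor:ageq14} (see \eqref{G}); it holds for every disk and every base point, with no topological input. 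By contrast, your proposed route through Lemma \ref{Lem:Castillon} is problematic here: that lemma is stated for \emph{complete} surfaces, whereas $\Sigma$ may have boundary in this theorem, and even when it applies, the resulting bound on $G(s)$ involves $\chi(\Sigma)$ and a threshold $s_0$, so the constant $C$ would no longer depend only on $a$, $c$ and $\norm{P}_1$ as claimed. Deleting that paragraph and simply quoting $G(s)\leq 1$ closes the gap and makes your proof coincide with the paper's. (Minor remark on the ``in particular'' clause: the paper phrases the dichotomy as ``if $\Sigma$ is complete and has infinite area, the area of disks tends to infinity, so the estimate $c\,a(p,s/2)\leq C$ forces compactness''; your contrapositive formulation --- bounded area, hence finite total area, hence parabolic --- is equivalent and fine.)
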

\begin{proof}
Let us suppose that the distance to the boundary were not bounded. Then there exists
a sequence of points $\set{p_i} \in \Sigma$ such that ${\rm dist}^{\Sigma}(p_i ,
\partial \Sigma) \To + \infty$. So, for each $p_i$ we can choose a real number $s_i$
such that $s_i \To + \infty$ and $\overline{D(p_i ,s_i)}\cap \partial \Sigma =
\emptyset $.

We argue as in Theorem \ref{Theo:QAGintegrable}. Take $b\geq 1$ so that $- \alpha := b(b(1-4a)+2a)<0$. Then, applying \eqref{MPRequation} to each disk $D(p_i ,s_i )$, we have
\begin{equation*}
c \, a(p_i , s_i /2) \leq C ,
\end{equation*}where $C$ is constant independing only on $a$, $c$ and $\norm{P}_1$.

Now, bearing in mind that the left hand side of the above inequality goes to
infinity and the right hand side remains bounded, we obtain a contradiction.

Also, if $\Sigma$ is complete and has not finite area, the above estimate and the
Hopf-Rinow Theorem imply that $\Sigma $ must be compact.

When $\Sigma $ is compact, the last formula follows taking the constant function $f \equiv 1$ on $\Sigma$.
\end{proof}

Now, we focus on the case $0< a \leq 1/4 $.

\begin{theorem}\label{Theo:distance}
Let $\Sigma $ be a Riemannian surface with $k-AAB$ and possibly with boundary.
Suppose that $L_{a,c} = \Delta + V - a K$ is nonpositive, has integrable potential with $c >0 $ and $0< a \leq 1/4$. Then, there exists a positive constant $C$ such that
$$ {\rm dist}_{\Sigma}(p , \partial \Sigma) \leq C , \, \, \forall p \in \Sigma .$$

In particular, if $\Sigma$ is complete with $\partial \Sigma = \emptyset$ then it
must be compact.  Moreover, it holds
$$ c \, A(\Sigma) - \norm{P}_1 \leq 2a \pi \, \chi (\Sigma) ,$$where $A(\Sigma)$ and $\chi (\Sigma)$ denote the area and Euler characteristic of $\Sigma$ respectively.
\end{theorem}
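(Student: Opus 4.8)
The plan is to follow the contradiction scheme used for Theorem~\ref{Theo:distancegeq}. The only genuinely new point is that, since now $a\le 1/4$, the length term produced by \eqref{MPRequation} comes with an \emph{unfavourable} sign and can no longer simply be discarded; controlling it is exactly where the $k$-AAB hypothesis enters.

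Assume ${\rm dist}_\Sigma(\cdot,\partial\Sigma)$ were unbounded and pick $p_i\in\Sigma$ with ${\rm dist}_\Sigma(p_i,\partial\Sigma)\to+\infty$ and $s_i\to+\infty$ with $\overline{D(p_i,s_i)}\cap\partial\Sigma=\emptyset$. Fix $b>1$; because $0<a\le 1/4$ one has $b(1-4a)+2a\ge 2a>0$, so $\alpha:=-b\big(b(1-4a)+2a\big)<0$. Applying \eqref{MPRequation} on $D(p_i,s_i)$ with $f(r)=(1-r/s_i)^b$ and \emph{keeping} the term carrying $c$, using that geodesic disks have $\chi(r)\le 1$ (so $G(p_i,s_i)\le 1$) and that $P\le 0$ with $(1-r/s_i)^{2b}\le 1$ (so $-\int_{D(p_i,s_i)}(1-r/s_i)^{2b}P\le\norm{P}_1$), one gets
$$ c\int_{D(p_i,s_i)}(1-r/s_i)^{2b}\ \le\ 2a\pi+\norm{P}_1+\frac{|\alpha|}{s_i^2}\int_0^{s_i}(1-r/s_i)^{2b-2}\,l(p_i,r)\,dr. $$
Since $0\le(1-r/s_i)^{2b-2}\le 1$, the coarea formula bounds the last integral by $\int_0^{s_i}l(p_i,r)\,dr=a(p_i,s_i)$; and $(1-r/s_i)^{2b}\ge 2^{-2b}$ on $D(p_i,s_i/2)$ gives $\int_{D(p_i,s_i)}(1-r/s_i)^{2b}\ge 2^{-2b}a(p_i,s_i/2)$. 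Hence
$$ c\,2^{-2b}\,a(p_i,s_i/2)\ \le\ 2a\pi+\norm{P}_1+\frac{|\alpha|}{s_i^2}\,a(p_i,s_i). $$

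Now the $k$-AAB hypothesis furnishes $k>0$ and constants $0<c_1\le c_2$ with $c_1 s^k\le a(p,s)\le c_2 s^k$ for all admissible disks of large enough radius (for surfaces with boundary one first passes to such a uniform two-sided polynomial estimate, comparing with a fixed base point via the triangle inequality if necessary). Then $\dfrac{|\alpha|}{s_i^2}a(p_i,s_i)\le \dfrac{|\alpha|c_2}{s_i^2}s_i^k\le \dfrac{2^k|\alpha|c_2}{c_1\,s_i^2}\,a(p_i,s_i/2)$, which for $i$ large is $\le \tfrac12\,c\,2^{-2b}a(p_i,s_i/2)$; absorbing it into the left side of the previous display yields $\tfrac12 c\,2^{-2b}a(p_i,s_i/2)\le 2a\pi+\norm{P}_1$, a bound independent of $i$, whereas $a(p_i,s_i/2)\ge c_1 2^{-k}s_i^k\to+\infty$ — a contradiction. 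So ${\rm dist}_\Sigma(\cdot,\partial\Sigma)\le C$ for some constant $C$. I expect this last manoeuvre to be the crux: for $a>1/4$ the length term sits on the good side and is thrown away, but for $a\le 1/4$ it must be retained, and the only thing that makes it harmless is the factor $s_i^{-2}$ it carries, which forces $a(p_i,s_i)/(s_i^2\,a(p_i,s_i/2))\to 0$ regardless of the degree $k$ — and $k$-AAB is precisely what bounds that ratio.

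For the remaining assertions I argue as in Theorem~\ref{Theo:distancegeq}. If $\Sigma$ is complete with $\partial\Sigma=\emptyset$ and were noncompact, Hopf--Rinow produces, for a fixed $p$, arbitrarily large radii $s$ (all avoiding the empty boundary) with $a(p,s/2)\to+\infty$; running the estimate above and using $k$-AAB again forces $a(p,s/2)$ to stay bounded, a contradiction, so $\Sigma$ is compact. When $\Sigma$ is compact (so $C^\infty_0(\Sigma)=C^\infty(\Sigma)$), test $I(f)\ge 0$ with $f\equiv 1$: Gauss--Bonnet gives $\int_\Sigma K=2\pi\chi(\Sigma)$, hence $0\le I(1)=-c\,A(\Sigma)-\int_\Sigma P+2a\pi\,\chi(\Sigma)$, and $P\le 0$ (so $\int_\Sigma P=-\norm{P}_1$) yields $c\,A(\Sigma)-\norm{P}_1\le 2a\pi\,\chi(\Sigma)$.
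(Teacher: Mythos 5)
Your proof is correct, and for the main case it takes a genuinely different --- and more economical --- route than the paper. The paper splits into two cases: for $k>2$ it invokes Corollary \ref{Cor:aleq14} (the logarithmic cut-off \eqref{function} coming from \cite{ER}), choosing $b$ with $2(b+1)\ge k>2b>2$ so that the weighted area term $c\,\beta^{2b}s^{-2b}a(se^{-\beta})\sim s^{k-2b}$ blows up while the error $\rho^{+}_{a,b}$ stays bounded; only for $k\le 2$ does it use the polynomial cut-off of Corollary \ref{Cor:ageq14}, where $k$-AAB makes $a(s)/s^2$ bounded and hence the right-hand side of \eqref{MPRequation} finite. You instead run the polynomial cut-off for every $k$, observing that what matters is not boundedness of $a(s)/s^{2}$ but of the ratio $a(s)/\bigl(s^{2}a(s/2)\bigr)$, which under polynomial area growth is $O(s^{-2})$ and can therefore be absorbed into the left-hand side; this unifies the two cases and dispenses with the logarithmic test function altogether, and I see nothing the heavier machinery buys for this particular statement. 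Two small points to tighten. First, you apply \eqref{MPRequation} with $0<a\le 1/4$ although Corollary \ref{Cor:ageq14} is stated for $a>1/4$; this is legitimate (the derivation from Lemma \ref{Lem:ColdingMinicozzi} with $f=(1-r/s)^{b}$ nowhere uses $a>1/4$, and the paper itself does the same in its $k\le 2$ subcase), but it should be said explicitly. Second, your passage from the limit $a(p,s)/s^{k}\to C$ of Definition \ref{Def:AAG} to uniform two-sided bounds $c_1 s_i^{k}\le a(p_i,s_i)\le c_2 s_i^{k}$ along a sequence of basepoints $p_i$ deserves a sentence, since the definition only asserts a limit for each fixed $p$ --- though the paper's own proof glosses over exactly the same uniformity issue. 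Your treatment of the compactness assertion (Hopf--Rinow plus the same estimate on arbitrarily large disks) and of the final inequality (testing $I$ with $f\equiv 1$ and Gauss--Bonnet, using $\int_\Sigma P=-\norm{P}_1$) coincides with the paper's.
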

\begin{proof}
Let us suppose that the distance to the boundary were not bounded. Then there exists
a sequence of points $\set{p_i} \in \Sigma$ such that ${\rm dist}^{\Sigma}(p_i ,
\partial \Sigma) \To + \infty$. So, for each $p_i$ we can choose a real number $s_i$
such that $s_i \To + \infty$ and $\overline{D(p_i ,s_i)}\cap \partial \Sigma =
\emptyset $.

Take $p \in \Sigma$ and $s>0$ so that $\overline{D(p,s)} \cap \partial \Sigma = \emptyset$. Let $f(r)$ be the radial function given by \eqref{function}. Set
$$ \omega :=  \int _{D(s e^{-s})}  V  + \int _{D(s)\setminus D(s e^{-s})} \left(
\frac{\ln(s/r)}{s}\right) ^{2b} V ,$$then
\begin{equation*}
\begin{split}
\omega &= c \int _{D(s)}f(s)^2 + \int _{D(s)} P f(s)^2 \geq c \int _{D(s)} f(s)^2 +
\int _{D(s)}P \\
 &\geq  c\int _{D(s)}f(s)^2 - \tilde{C},
\end{split}
\end{equation*}where $\tilde C$ is a nonnegative constant depending only on $\norm{P}_1$.

Now, let $\beta \in \r $ be a real number greater than one, then
\begin{equation*}
\int _{D(s)}f(s)^2 = \int _{D(s e^{-s})}  1 + \int _{D(s)\setminus D(s e^{-s})} \left(
\frac{\ln(s/r)}{s}\right) ^{2b} \geq  \frac{\beta ^{2b}}{s^{2b}}a(s e^{-\beta}).
\end{equation*}

Thus, joining the above inequalities we get
\begin{equation*}
\int _{D(s e^{-s})}V  + \int _{D(s)\setminus D(s e^{-s})} \left(
\frac{\ln(s/r)}{s}\right) ^{2b}V \geq c \frac{\beta ^{2b}}{s^{2b}}a(s e^{-\beta}) -
\tilde{C}.
\end{equation*}

Now, choose $b > 1$ such that $2(b+1) \geq k > 2b > 2 $. Thus, by Corollary
\ref{Cor:aleq14} and the above inequality

\begin{equation}\label{eq1}
c \frac{\beta ^{2b}}{s^{2b}} a(s e^{-\beta}) \leq C  +\rho^{+}_{a,b}(\delta _0 ,s),
\end{equation}where $C$ is a positive constant depending on $a$ and $\norm{P}_1$.

Now, since $\Sigma $ has $k-$AAB and $k>2b$ then for $s$ large enough we have that
the left hand side goes to infinity
\begin{equation}\label{eq2}
c \frac{\beta ^{2b}}{s^{2b}} a(s e^{-\beta}) \sim s^{k-2b} \To + \infty ,
\end{equation}but the right hand side remains bounded (see the asymptotic properties
of $\rho ^+ $ in Corollary \ref{Cor:aleq14}).

Thus, applying \eqref{eq1} to each disk $D(p_i ,s_i )$, and bearing in mind that,
from \eqref{eq2}, the left hand side of \eqref{eq1} goes to infinity, and the right
hand side remains bounded, we obtain a contradiction.

We still have to consider the case $k\leq 2$. Here, we use Corollary
\ref{Cor:ageq14}. Thus, for $b=1$ and the $k-$AAB, $k\leq 2$, of $\Sigma$, the right
hand side of \eqref{MPRequation} remains bounded by some positive constant $C$ (depending on the $k-$AAB) as $s$ goes to infinity. But,
$$ \int _{D(s)} \left( 1- r/s\right)^{2b} V \geq c \frac{a(s /2)}{4} - \tilde{C},
$$for a nonnegative constant $\tilde C$ depending only on $a$ and $\norm{P}_1$. Thus, we obtain
\begin{equation}\label{eqcontra}
c \, a(s/2) \leq C  \, \text{ for all } \, s >0,
\end{equation}where $C$ is a constant depending on $a$, $\norm{P_1} $ and the $k-$AAB.

Applying \eqref{eqcontra} to each disk $D(p_i ,s_i )$, and bearing in mind that the
left hand side of \eqref{eqcontra} goes to infinity and the right hand side remains
bounded, we obtain a contradiction.

Now, if $\Sigma$ is complete, then the above estimate and the Hopf-Rinow Theorem
imply that $\Sigma $ must be compact. The last formula follows by taking the constant function $f \equiv 1$ as above. 
\end{proof}

We have worked with nonnegative differential operators $-L_{a,c}$. This means that
all the eigenvalues of $-L_{a,c}$ are nonnegative. $-L_{a,c}$ has finite index if
has only finitely many negative eigenvalues or, equivalently, if there exists a compact set $K\subset \Sigma$ so that $-L_{a,c}$ is nonnegative acting on $f \in C^{\infty}_0(\Sigma \setminus K)$ (see \cite{FC}). As a consequence of these proofs we have the following

\begin{corollary}\label{Cor:QAG}
Under the hypothesis of Theorem \ref{Theo:QAGintegrable}, if $-L_{a,c}$, $c=0$, has
finite index and $\Sigma$ is complete with $\partial \Sigma = \emptyset$, then $\Sigma$ is conformally equivalent to a compact Riemann surface with a finite number of points removed.
\end{corollary}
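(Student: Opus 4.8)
The plan is to reduce the finite-index case to the stable case already treated in Theorem \ref{Theo:QAGintegrable} by excising a compact set and controlling what happens on the ends. First I would invoke the standard fact recalled in the excerpt (following Fischer-Colbrie \cite{FC}): since $-L_{a,0}$ has finite index, there is a compact set $K\subset\Sigma$ such that $-L_{a,0}$ is nonnegative acting on $C^\infty_0(\Sigma\setminus K)$. Enlarging $K$ if necessary, I may assume $K$ is a smooth compact subsurface with boundary, so that $\Sigma'=\Sigma\setminus \mathrm{int}(K)$ is a Riemannian surface with compact boundary $\partial\Sigma'$, with finitely many connected components $\Sigma'_1,\dots,\Sigma'_m$ (one per end of $\Sigma$, after possibly enlarging $K$ to separate the ends). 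On each $\Sigma'_j$ the operator $L_{a,0}=\Delta + P - aK$ is nonpositive acting on compactly supported functions, still has integrable potential (the restriction of $P$ to $\Sigma'_j$ is nonpositive and in $L^1$, with $\norm{P}_{L^1(\Sigma'_j)}\le\norm{P}_1$), and $a>1/4$ — so the hypotheses of Theorem \ref{Theo:QAGintegrable} are met on each $\Sigma'_j$.

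Next I would apply Theorem \ref{Theo:QAGintegrable} to each $\Sigma'_j$. That theorem, for a surface \emph{with} boundary, yields the Quadratic Area Bound: for every $p\in\Sigma'_j$ and every $s>0$ with $\overline{D(p,s)}\cap\partial\Sigma'_j=\emptyset$ one has $a(p,s/2)\le \tilde C s^2$, with $\tilde C$ depending only on $a$, $c=0$ and $\norm{P}_1$ (hence a uniform constant across all the ends). In particular each end $\Sigma'_j$ has at most quadratic area growth, and therefore so does $\Sigma$ itself (the compact piece $K$ contributes only a bounded additive constant, absorbed into a slightly larger coefficient). So $\Sigma$ has Quadratic Area Growth in the sense of Definition \ref{Def:QAG}.

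It remains to get finite topology. Here I would re-run the finite-topology argument from the proof of Theorem \ref{Theo:QAGintegrable}, but carried out on each end $\Sigma'_j$ rather than on all of $\Sigma$: using Corollary \ref{Cor:ageq14} and the test functions $f(r)=(1-r/s)^b$ supported in geodesic disks of $\Sigma'_j$ avoiding $\partial\Sigma'_j$, the same chain of inequalities \eqref{th1:eq2} forces $\liminf_{s\to\infty}\chi(p,s)>-\infty$ for disks centered in $\Sigma'_j$; combined with Lemma \ref{Lem:Castillon} (applied to the complete-with-boundary piece, or to its double, whichever is cleaner) this shows each end has finite topology. Since there are finitely many ends and $K$ is compact, $\Sigma$ has finite topology. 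Finally, finite topology together with Quadratic Area Growth gives, exactly as at the end of Theorem \ref{Theo:QAGintegrable}, that $\Sigma$ is conformally a compact Riemann surface with finitely many points removed.

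The main obstacle I anticipate is the bookkeeping at $\partial K$: Theorem \ref{Theo:QAGintegrable} and the topology argument are phrased for geodesic disks $D(p,s)$ with $\overline{D(p,s)}$ disjoint from the boundary, so one must be careful that the disks exhausting an end $\Sigma'_j$ can indeed be taken to avoid $\partial\Sigma'_j$ (true because the relevant points recede to infinity in $\Sigma$) and that the Euler-characteristic estimate from Lemma \ref{Lem:Castillon}, which is stated for complete \emph{boundaryless} surfaces, is applied in a legitimate way — either by passing to the double of $\Sigma'_j$ across $\partial\Sigma'_j$, or by noting that $\chi(p,s)$ for large $s$ stabilizes to the Euler characteristic of the end regardless. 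Once that technical point is handled, everything else is a direct transcription of the stable case.
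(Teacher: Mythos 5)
Your reduction via the compact set $K$ furnished by finite index, and your application of the Quadratic Area Bound part of Theorem \ref{Theo:QAGintegrable} to the finitely many components of $\Sigma\setminus{\rm int}(K)$, are sound and give quadratic area growth (hence parabolicity) of the ends. The gap is in your finite-topology step. The argument in the proof of Theorem \ref{Theo:QAGintegrable} requires geodesic disks $D(p_0,s)$ about a \emph{fixed} center with $s\to+\infty$, because Lemma \ref{Lem:Castillon} concerns the Euler characteristic of an exhaustion of a complete boundaryless surface. Inside an end $\Sigma'_j$ with compact boundary you cannot have both $s\to+\infty$ and $\overline{D(p,s)}\cap\partial\Sigma'_j=\emptyset$ for a fixed $p$, so you must move the centers to infinity; then nothing guarantees $\chi(p_i,s_i)\to-\infty$ when the end has infinite genus, since a handle need not be contained in any admissible disk (e.g.\ when its diameter exceeds its distance to $\partial\Sigma'_j$). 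Neither of your proposed repairs closes this: the double of $\Sigma'_j$ does not carry the operator (the metric is not smooth across $\partial\Sigma'_j$ and nonpositivity of $L_{a,0}$ is not inherited), while ``$\chi(p,s)$ stabilizes to the Euler characteristic of the end'' is exactly the finite-topology assertion you are trying to prove. Note also that your normalization ``one component per end, after enlarging $K$ to separate the ends'' is circular if $\Sigma$ a priori has infinitely many ends.

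The way to close the gap --- and what the paper's ``as a consequence of these proofs'' points to --- is to stay on all of $\Sigma$ and modify the test function rather than the domain: fix $p_0$ and $r_0$ with $K\subset D(p_0,r_0)$ and replace $f(r)=(1-r/s)^b$ by $\phi f$, where $\phi$ is a fixed radial cutoff vanishing on $D(p_0,r_0)$ and equal to $1$ off $D(p_0,r_0+1)$. Since $-L_{a,0}$ is nonnegative on $C^{\infty}_0(\Sigma\setminus K)$ one has $I(\phi f)\geq 0$, and every term by which $I(\phi f)$ differs from the quantities controlled in Lemma \ref{Lem:ColdingMinicozzi} is supported in the fixed annulus $D(p_0,r_0+1)\setminus D(p_0,r_0)$ and is bounded independently of $s$ (using $0\leq f\leq 1$, $|f'|\leq b/s$, the integrability of $P$, and the compactness of the annulus). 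Hence inequality \eqref{th1:eq2} holds with an extra additive constant $C'$, and the infinite-topology contradiction proceeds verbatim after choosing $M=(C+C'+1)/(2a\pi)$ in the estimate of $G(s)$. Combined with your (correct) quadratic area bound, this yields the stated conformal classification.
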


And

\begin{corollary}\label{Cor:distance}
Under the hypothesis of Theorem \ref{Theo:distancegeq} or Theorem \ref{Theo:distance}, if $-L_{a,c}$ has finite index and $\Sigma$ is complete with $\partial \Sigma = \emptyset$, then it must be either compact or parabolic with finite area.
\end{corollary}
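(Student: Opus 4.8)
The plan is to deduce the statement from Theorems~\ref{Theo:distancegeq} and~\ref{Theo:distance} by localising at infinity: finite index means the operator is stable outside a compact set, so the ends of $\Sigma$ are surfaces with compact boundary to which the earlier theorems apply. Concretely, since $-L_{a,c}$ has finite index there is a compact $K\subset\Sigma$ with $-L_{a,c}$ nonnegative on $C^\infty_0(\Sigma\setminus K)$ (the fact recalled after Definition~\ref{Def:integrablepotential}; see \cite{FC}). I would enlarge $K$ to a compact subsurface $\Omega$ with smooth boundary, with $K$ in the interior of $\Omega$; since $\partial\Omega$ is a finite union of circles, $\Sigma\setminus\mathring{\Omega}$ has finitely many connected components $E_1,\dots,E_m$, each a complete Riemannian surface (being a closed subset of the complete surface $\Sigma$) with compact boundary $\partial E_j\subset\partial\Omega$.

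The first real step is to check that each $E_j$ satisfies all the hypotheses of the relevant theorem. Extension by zero embeds $C^\infty_0(E_j)$ into $C^\infty_0(\Sigma\setminus K)$, so $L_{a,c}$ is nonpositive on $C^\infty_0(E_j)$; the restriction of $V=c+P$ has the same form with the same $c>0$ and with $P|_{E_j}\leq 0$, $\norm{P|_{E_j}}_1\leq\norm{P}_1<\infty$, so $L_{a,c}$ still has integrable potential on $E_j$. When $0<a\leq 1/4$ one must also transfer the $k$-AAB, and here the key observation is that a geodesic disk $D(p,s)$ with $\overline{D(p,s)}\cap\partial E_j=\emptyset$ is the same whether computed inside $E_j$ or inside $\Sigma$ (a path from $p$ leaving $E_j$ must cross $\partial E_j$), so the length and area data of $E_j$ agree with those of $\Sigma$ on the relevant range of radii and the $k$-AAB descends to $E_j$.

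Then I would apply the quoted theorems one end at a time. If $0<a\leq 1/4$, Theorem~\ref{Theo:distance} applied to $E_j$ gives a constant $C_j$ with ${\rm dist}_{E_j}(p,\partial E_j)\leq C_j$ for all $p\in E_j$; since $\partial E_j$ is compact this makes $E_j$ bounded, hence compact by Hopf--Rinow, and $\Sigma=\Omega\cup\bigcup_{j=1}^{m}E_j$ is compact. If $a>1/4$, for each $E_j$ there are two possibilities: either the area of its geodesic disks tends to infinity with the radius, in which case Theorem~\ref{Theo:distancegeq} gives the bound ${\rm dist}_{E_j}(p,\partial E_j)\leq C_j$ and again $E_j$ is compact; or it does not, and then, the area function $a(p,\cdot)$ being nondecreasing, $E_j$ has finite area. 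In every case each $E_j$ is compact or of finite area, so $\Sigma$ is either compact or satisfies ${\rm Area}(\Sigma)={\rm Area}(\Omega)+\sum_{j=1}^{m}{\rm Area}(E_j)<\infty$. Finally, a complete Riemannian surface of finite area is parabolic, since for any $p$ one has $\int^{\infty}\frac{s\,ds}{a(p,s)}\geq\frac{1}{{\rm Area}(\Sigma)}\int^{\infty}s\,ds=+\infty$, the standard volume test for parabolicity that underlies the ``compact or parabolic with finite area'' alternative in Theorem~\ref{Theo:distancegeq}. Hence $\Sigma$ is compact, or parabolic with finite area.

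The step I expect to be the main obstacle is exactly this passage to the ends: not the bookkeeping above, but being sure that the area-growth hypotheses genuinely survive restriction to $E_j$ and, more delicately, that the contradiction arguments inside the proofs of Theorems~\ref{Theo:distancegeq} and~\ref{Theo:distance} still function there, since in those proofs the geodesic disks $D(p_i,s_i)$ are centered at points $p_i$ escaping to infinity, so one needs the lower area bounds used in the contradiction to hold uniformly along such sequences rather than merely at a fixed centre. Once that is settled, the remainder is just Hopf--Rinow together with the volume criterion for parabolicity.
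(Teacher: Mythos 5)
Your overall strategy---localising at infinity via the compact set $K$ outside which $-L_{a,c}$ is nonnegative, then running the arguments of Theorems \ref{Theo:distancegeq} and \ref{Theo:distance} on the complement---is what the paper intends by ``as a consequence of these proofs'', and most of your reductions (extension by zero of test functions, coincidence of geodesic disks of $E_j$ and of $\Sigma$ when they avoid $\partial E_j$, Hopf--Rinow, the volume test for parabolicity) are correct. The genuine gap is the dichotomy you use when $a>1/4$: ``either the area of the geodesic disks of $E_j$ goes to infinity, or $E_j$ has finite area.'' For an end $E_j$ with nonempty compact boundary, every point is at \emph{finite} distance from $\partial E_j$, so at a fixed centre $p$ the disks $D(p,s)$ avoid $\partial E_j$ only for $s<{\rm dist}(p,\partial E_j)<\infty$; the condition ``$a(p,s)\to\infty$ as $s\to\infty$'' is therefore never testable at a single centre, and its failure does not produce a point whose disks exhaust $E_j$ with bounded area. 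Under the reading that actually makes the proof of Theorem \ref{Theo:distancegeq} function (a lower area bound along sequences $p_i$ with ${\rm dist}(p_i,\partial E_j)\to\infty$ and radii $s_i\to\infty$), the negation only yields disks of bounded area around escaping centres, which is perfectly compatible with $E_j$ having infinite total area (picture infinitely many long thin fingers attached to the finite-area collar $\{{\rm dist}(\cdot,\partial E_j)\leq R\}$). So the ``finite area'' branch --- which is precisely the branch that produces the parabolic alternative in the corollary --- is not established. You flag this tension yourself, but it is not merely a worry about the theorems' internal proofs: it breaks your own case analysis.

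The way to close the gap is not to apply the theorems as black boxes to the ends, but to re-run the inequality behind them with a test function adapted to finite index: fix $p_0$ and $R_0$ with $K\subset D(p_0,R_0)$, and take $f$ radial with $f\equiv 0$ on $D(p_0,R_0)$, $f(r)=r-R_0$ on $D(p_0,R_0+1)\setminus D(p_0,R_0)$, and $f(r)=\bigl(1-(r-R_0-1)/s\bigr)^{b}$ beyond. Such $f$ is supported in $\Sigma\setminus K$, hence admissible, and the computation of Lemma \ref{Lem:ColdingMinicozzi} leading to \eqref{MPRequation} goes through with extra terms controlled by the (finite) length, area and total curvature of the compact annulus $D(p_0,R_0+1)\setminus D(p_0,R_0)$. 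One obtains $c\,{\rm Area}\bigl(D(p_0,R_0+1+s/2)\setminus D(p_0,R_0+1)\bigr)\leq C$ with $C$ independent of $s$; letting $s\to\infty$ shows directly that a noncompact $\Sigma$ has finite area, and your volume-test argument then gives parabolicity. The $0<a\leq 1/4$ case has the analogous difficulty with escaping centres, and the same annular test function repairs it.
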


\section{Non positive operators with linear decay}\label{Sec:Decay}

We follow the notation of the previous section. First, let us make explicit the
operators we will work with.

\begin{definition}\label{Def:QDecay}
Let $\Sigma$ be a Riemannian surface. We say that $L_{a,c} = \Delta + V - a K$ has
{\bf Linear Decay} if $L_{a,c}$ is a differential operator on $\Sigma$ acting on
piecewise smooth functions with compact support, i.e. $f \in C^{\infty}_0 (\Sigma)$,
where $a > 0$ is constant, $\Delta$ and $K$ are the Laplacian and Gauss curvature
associated to the metric $g$ respectively. Moreover, we will assume that $V:= c + P
$, where $c $ is a nonnegative constant and $P $ satisfies
\begin{equation*}
|P(q)|\, \leq M/d(p_0,q) , 
\end{equation*}for some point $p_0\in \Sigma$, here $M$ is a nonnegative constant
\end{definition}

We continue assuming that $L_{a,c}$ has linear decay. In this case, we
obtain a stronger result.

\begin{theorem}\label{Theo:QDC}
Let $\Sigma $ be a complete Riemannian surface with $\partial \Sigma = \emptyset$. Suppose that $L_{a,c} = \Delta + V - a K$ is nonpositive acting on $f\in C^\infty _0(\Sigma)$, has linear decay, with $c > 0 $ and $a>1/4$. Then, $\Sigma$ is compact.\end{theorem}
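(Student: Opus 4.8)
The plan is to mimic the argument of Theorem \ref{Theo:distancegeq}, replacing the global $L^1$ bound on $P$ by the estimate that linear decay gives on each geodesic disk, and then to upgrade the conclusion ``$\Sigma$ is compact or parabolic with finite area'' to ``$\Sigma$ is compact'' by exploiting that $c>0$ together with the linear decay forces a genuine quadratic area bound (not just a sub-quadratic one), which already excludes the parabolic-finite-area alternative. First I would argue by contradiction: suppose $\partial\Sigma=\emptyset$ and $\Sigma$ is noncompact and complete. Fix the base point $p_0$ from the linear decay hypothesis. For $s>0$ consider the disk $D(p_0,s)$ and, as in the earlier proofs, choose $b\geq 1$ with $-\alpha:=b(b(1-4a)+2a)<0$ (possible since $a>1/4$), and feed the radial test function $f(r)=(1-r/s)^b$ into \eqref{MPRequation} via Corollary \ref{Cor:ageq14}.

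The key new step is to bound the potential term $\int_{D(p_0,s)}(1-r/s)^{2b}P$ from below. Writing $V=c+P$, the $c$-part contributes $c\int_{D(p_0,s)}(1-r/s)^{2b}\geq c\,(1/2)^{2b}\,a(p_0,s/2)$ exactly as before. For the $P$-part, linear decay gives $|P(q)|\leq M/r$ where $r=d(p_0,q)$, so by the coarea formula
\[
\left|\int_{D(p_0,s)}(1-r/s)^{2b}P\right|\;\leq\; M\int_0^s \frac{(1-r/s)^{2b}}{r}\,l(p_0,r)\,\frac{dr}{?}
\]
— but $l(p_0,r)$ is not a priori controlled, so instead I would integrate $|P|$ against the measure and use the previously established Quadratic Area Bound (Theorem \ref{Theo:QAGintegrable}, whose hypotheses are met here: linear decay with $c\geq 0$, $a>1/4$, since a linear-decay potential is not integrable but the relevant one-sided bound still applies — here one must be a little careful, see below). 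Granting $a(p_0,\rho)\leq C_0\rho^2$, one gets $\int_{D(p_0,s)}\frac{M}{r}\leq M\int_0^s \frac{1}{r}\,d a(p_0,r)$, and integrating by parts $\int_0^s r^{-1}\,da(p_0,r)=a(p_0,s)/s+\int_0^s a(p_0,r)/r^2\,dr\leq C_0 s+C_0 s=2C_0 s$. Hence $\bigl|\int_{D(p_0,s)}(1-r/s)^{2b}P\bigr|\leq 2MC_0\,s$. Plugging into \eqref{MPRequation}:
\[
c\,(1/2)^{2b}\,a(p_0,s/2)\;\leq\;2a\pi\,G(s)+2MC_0\,s .
\]

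The main obstacle is then controlling $G(s)$ and closing the contradiction. As in Theorem \ref{Theo:QAGintegrable} one first shows $\Sigma$ has finite topology: if not, $\chi(s)\to-\infty$, and the chain of inequalities there (which only used $\int(1-r/s)^{2b}P$ bounded by a function growing slower than $a(p_0,s/2)$ — here $O(s)$ versus the area term) gives $G(s)\to-\infty$, contradicting $G(s)\geq 0$; note this requires the $O(s)$ bound above to be $o(a(p_0,s/2))$, which needs a lower area bound — this is exactly where I expect to have to work, and the resolution is that if $a(p_0,s/2)$ itself fails to grow at least linearly then $\Sigma$ has finite area and, being complete and parabolic with a stable Schrödinger operator with $c>0$, cannot exist (one tests with $f\equiv 1$: $0\leq I(1)=-\int_\Sigma V+a\int_\Sigma K=-cA(\Sigma)-\int_\Sigma P+2a\pi\chi(\Sigma)$, and $|\int_\Sigma P|<\infty$ would be forced by finite area and linear decay only if the area is finite and $p_0$-centered disks are bounded, yielding $cA(\Sigma)\leq 2a\pi\chi(\Sigma)+\|P\|_{1}<\infty$, i.e. $\chi(\Sigma)>0$, so $\Sigma$ is a sphere or disk — compact, contradiction). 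Once finite topology is in hand, $G(s)\leq\chi(\Sigma)$ eventually by Lemma \ref{Lem:Castillon}, so the displayed inequality reads $c\,(1/2)^{2b}a(p_0,s/2)\leq 2a\pi\chi(\Sigma)+2MC_0 s$, i.e. $a(p_0,s/2)=O(s)$, so $\Sigma$ has \emph{linear} area growth. But a complete noncompact surface with linear area growth is parabolic with infinite area, and then choosing $s_i\to\infty$ with $D(p_0,s_i)$ exhausting $\Sigma$ and taking $s_i\to\infty$ in the inequality forces $cA(D(p_0,s_i/2))$ to stay $O(s_i)$ while simultaneously the first-variation/test-function argument with cutoffs $f_i$ supported on $D(p_0,s_i)$ and $\to 1$ shows $c\,A(\Sigma)\leq 2a\pi\chi(\Sigma)<\infty$ — so $A(\Sigma)<\infty$, contradicting infinite area. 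Therefore $\Sigma$ is compact. I would present the finite-topology step and the $f\equiv 1$ / cutoff step carefully; the coarea integration-by-parts bound on $\int|P|$ is routine.
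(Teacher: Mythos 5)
Your proposal has a genuine gap at its foundation: the bound $\bigl|\int_{D(p_0,s)}(1-r/s)^{2b}P\bigr|\leq 2MC_0\,s$ rests on a Quadratic Area Bound $a(p_0,\rho)\leq C_0\rho^2$ that you import from Theorem \ref{Theo:QAGintegrable}, but the hypotheses of that theorem --- that $P$ be nonpositive and lie in $L^1(\Sigma)$ --- are exactly what a linear-decay potential fails to provide. You flag this yourself (``one must be a little careful, see below''), but the later discussion never actually establishes the area bound, so the argument is circular. Proving the QAB directly from \eqref{MPRequation} runs into the same obstruction: without a priori control of $l(p_0,r)$, the coarea formula only gives $\int_{D(s)}(1-r/s)^{2b}|P|\leq M\int_0^s (1-r/s)^{2b}r^{-1}l(r)\,dr\leq M\,a(s)+{\rm const}$, and the term $M\,a(s)$ cannot be absorbed by $\frac{\alpha}{2^{2b-2}s^2}a(s/2)$ on the other side unless $M$ is small. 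Two further steps would fail even granting the $O(s)$ bound. The finite-topology argument of Theorem \ref{Theo:QAGintegrable} derives its contradiction from $0\leq 2a\pi G(s)+C$ with $C$ a \emph{constant}; with $C$ replaced by $2MC_0 s$ one would need $G(s)\to-\infty$ at least linearly in $s$, which infinite topology alone does not give. And your closing contradiction uses ``complete noncompact with linear area growth implies infinite area,'' which is false (a finite-area cusp has bounded $a(s)$), while the competing estimate $c\,A(\Sigma)\leq 2a\pi\chi(\Sigma)+\norm{P}_1$ again presupposes $P\in L^1(\Sigma)$.

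The theorem has a much shorter proof that bypasses the Colding--Minicozzi machinery entirely, and it is the one the paper uses. Linear decay gives $|P(q)|\leq M/d(p_0,q)$, so on $\tilde\Sigma:=\Sigma\setminus D(p_0,s_0)$ with $s_0:=2M/c$ one has $V=c+P\geq c/2>0$; the operator is still nonpositive on $C^{\infty}_0(\tilde\Sigma)$, so the classical Distance Lemma for potentials bounded below by a positive constant (\cite[Theorem 2.8]{MPR}) bounds $d(q,\partial\tilde\Sigma)$ for every $q\in\tilde\Sigma$ by $\pi\sqrt{\left(1+\frac{1}{4a-1}\right)2a/c}$. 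Hence every point of $\Sigma$ lies within a fixed distance of $p_0$, and Hopf--Rinow gives compactness. If you insist on your route, the honest first step would be to prove a Quadratic Area Bound under linear decay from scratch (say by a bootstrap on $a(p_0,s)$), but the disk-removal argument makes all of that unnecessary.
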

\begin{proof}

Since $P$ has linear decay and $\Sigma$ is complete, there exists $s_0 >0$ so that $|P(q)| \leq c/2$ for all $q \in \Sigma \setminus D(p_0, s_0)$ (take $s_0 := 2M/c$). Set $\tilde \Sigma := \Sigma \setminus D(p_0,s_0)$, where $\partial \tilde \Sigma = \partial D(p_0,s_0)$.

On the one hand, from \cite[Theorem 2.8]{MPR}, the distance from every point $q \in \tilde \Sigma$ to the boundary of $\tilde \Sigma$ satisfies:
\begin{equation*}
d(q,\partial \tilde \Sigma) \leq \pi \sqrt{\left(1+\frac{1}{4a-1}\right)\frac{a}{c}} .
\end{equation*} 

On the other hand, the distance from every point $p \in D(p_0,s_0)$ to $\partial \tilde \Sigma$ is bounded by $2M/c$. That is, the distance from any two points in $\Sigma$ is uniformly bounded, depending only on $M$, $a$ and $c$. Therefore, since $\Sigma$ is complete, the Hopf-Rinow Theorem implies that $\Sigma$ is compact.
\end{proof}

\section{Stable surfaces in three-manifolds}\label{Sec:stable}

Let $\Sigma$ be a two-sided surface with constant mean curvature $H$ (in short,
$H-$surface) in a Riemannian three-manifold $\amb$. Throughout the rest of the paper, for the sake of simplicity, we will assume the ambient manifold $\amb$ is orientable without mention it. We will assume $\amb$ has bounded geometry, that is, $\amb$ has bounded sectional curvatures and injectivity radius bounded from below. $\Sigma$ is stable if (see \cite{SY} for the minimal case or \cite{BdCE} for the constant mean curvature case)
$$ \int _{\Sigma} \psi ^2 |A|^2 + \int _{\Sigma} \psi ^2 {\rm Ric}_{\amb} (N,N)
\leq \int_{\Sigma} |\nabla \psi|^2  $$for all compactly supported functions $\psi
\in H^{1,2}_c (\Sigma)$. Here $|A|^2$ denotes the the square of the length of the
second fundamental form of $\Sigma$, ${\rm Ric}_{\amb} (N,N)$ is the Ricci curvature
of $\amb$ in the direction of the normal $N$ to $\Sigma$ and $\nabla $ is the
gradient w.r.t. the induced metric.

One writes the stability inequality in the form
$$ \left.\frac{d^2}{dt^2}\right\vert _{t=0}\left({\rm Area}(\Sigma (t))-
2H {\rm Volume}(\Sigma (t))\right)= - \int _{\Sigma} \psi L \psi \geq 0 ,$$where $L$
is the linearized operator of the mean curvature
$$ L = \Delta + |A|^2 + {\rm Ric}_{\amb} .$$

In terms of $L$, stability means that $-L$ is nonnegative, i.e., all its eigenvalues
are nonnegative. $\Sigma$ is said to have finite index if $-L$ has only finitely
many negative eigenvalues. It is well known that the stability operator $L$ can be written as
\begin{equation*}
L = \Delta - K + (4H^2 - K_e +S),
\end{equation*}where $K$ and $K_e$ are the Gaussian curvature and extrinsic
curvature (i.e., the product of the principal curvatures) of $\Sigma$, and $S$ is
the scalar curvature of $\amb$. Hence, as a direct application of the previous
results, we have

\begin{theorem}\label{Cor:stable}
Assume $\amb$ has bounded geometry. Let $\Sigma \subset \amb$ be a complete oriented $H-$surface with finite index. Set $\eps \geq 0$ and
\begin{eqnarray*}
P^-_\eps &:=& {\rm min}\set{0 , 4H^2 - K_e +S -\eps} , \\
P^+_\eps &:=& {\rm max}\set{0 , 4H^2 - K_e +S -\eps}
\end{eqnarray*}

\begin{itemize}
\item If $P^-_0 \in L^1 (\Sigma)$, $\Sigma$ is conformally equivalent
to a compact Riemann surface with a finite number of points removed. Moreover, $P^+
_{0}\in L^1(\Sigma)$.
\item If $P^-_\eps \in L^1 (\Sigma)$ for some $\eps >0$, then $\Sigma$ is compact.
\end{itemize}
\end{theorem}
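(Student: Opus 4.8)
The plan is to view the stability operator $L=\Delta+|A|^2+\mathrm{Ric}_{\amb}=\Delta-K+V$, with $V:=4H^2-K_e+S$, as an operator $\Delta+V-aK$ with $a=1>1/4$, and to feed suitable operators with smaller potential into Corollary \ref{Cor:QAG} and Corollary \ref{Cor:distance}. Two elementary observations will be used repeatedly: (i) if $\tilde V\le V$ pointwise then the index form of $\tilde L:=\Delta-K+\tilde V$ dominates that of $L$, so if $-L$ is nonnegative on $C^\infty_0(\Sigma\setminus\mathcal K)$ for some compact $\mathcal K$ then so is $-\tilde L$, i.e. $-\tilde L$ has finite index whenever $-L$ does; and (ii) $V=P^+_0+P^-_0$.

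For the first item, put $\hat L:=\Delta-K+P^-_0$. Since $P^-_0=\min\{0,V\}\le V$, (i) shows $-\hat L$ has finite index, and $\hat L$ has integrable potential with $c=0$, $P=P^-_0\in L^1(\Sigma)$ and $a=1>1/4$; hence Corollary \ref{Cor:QAG} gives that $\Sigma$ is conformally a compact Riemann surface with finitely many points removed, and Theorem \ref{Theo:QAGintegrable} that it has Quadratic Area Bound. To obtain $P^+_0\in L^1(\Sigma)$ I would rerun the estimate in the proof of Theorem \ref{Theo:QAGintegrable}, but keeping the true potential $V$: for $b\ge1$ with $\alpha:=-b(b(1-4a)+2a)>0$, Corollary \ref{Cor:ageq14} applied to $L$ gives $\int_{D(s)}(1-r/s)^{2b}V\le 2\pi G(s)\le 2\pi$, where $G(s)\le1$ because $\chi(s)\le1$ and $(f(r)^2)'\le0$; then (ii) yields $\int_{D(s)}(1-r/s)^{2b}P^+_0\le 2\pi+\norm{P^-_0}_1$ uniformly in $s$, and monotone convergence as $s\to\infty$ forces $\int_\Sigma P^+_0<\infty$. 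On the compact set where $-L$ is not known to be nonnegative one works on its complement, exactly as in the proof of Corollary \ref{Cor:QAG}, the omitted compact contribution to $\int P^+_0$ being finite by continuity; the mild non-smoothness of $P^{\pm}_0$ is harmless, or is removed by passing to a smooth minorant of $V$.

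For the second item, first note $P^-_\eps=\min\{0,V-\eps\}\le\min\{0,V\}=P^-_0\le0$, so $\norm{P^-_0}_1\le\norm{P^-_\eps}_1<\infty$ and the first item already applies: $\Sigma$ has finite topology and $V\in L^1(\Sigma)$. Now set $\check L:=\Delta-K+(\eps+P^-_\eps)$. Since $\eps+P^-_\eps\le V$ (check the two cases $V\ge\eps$ and $V<\eps$), (i) shows $-\check L$ has finite index, while $\check L$ has integrable potential with $c=\eps>0$, $P=P^-_\eps\in L^1(\Sigma)$ and $a=1>1/4$; so Corollary \ref{Cor:distance} gives that $\Sigma$ is compact or parabolic with finite area.

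It remains to discard the second alternative, which is the only step where the ambient hypotheses beyond stability enter. I would use that a complete \emph{noncompact} $H$-surface in a three-manifold $\amb$ of bounded geometry has infinite area: bounded geometry together with the constancy of $H$ yields, via the monotonicity formula, a radius $r_0>0$ and a constant $c_0>0$ depending only on the geometry bounds and $H$ with $\mathrm{Area}(D(p,r_0))\ge c_0$ for every $p\in\Sigma$ — the density of $\Sigma$ at $p$ is at least $\pi$, and for small radius the sheet of $\Sigma$ through $p$ is a controlled normal graph, hence contained in an intrinsic disk of fixed radius about $p$; choosing $p_k\in\Sigma$ with $\mathrm{dist}_\Sigma(p_k,p_0)=3kr_0$, possible by completeness and noncompactness, makes the disks $D(p_k,r_0)$ pairwise disjoint and forces $\mathrm{Area}(\Sigma)=\infty$, a contradiction. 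Hence $\Sigma$ is compact. Essentially all the work is the pointwise comparison of index forms; the two steps that need care are the passage from Corollary \ref{Cor:ageq14} (stated for $-L$ nonnegative) to the finite-index setting — already carried out inside Corollary \ref{Cor:QAG}, so one simply mirrors it — and the monotonicity-formula area lower bound, which is exactly what rules out the ``parabolic with finite area'' case.
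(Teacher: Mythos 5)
Your proposal is correct and follows essentially the same route as the paper: the same pointwise comparison $L \geq \Delta - K + \eps + P^-_\eps$ feeding into Corollary \ref{Cor:QAG} (for $\eps=0$) and Corollary \ref{Cor:distance} (for $\eps>0$), and the same rerun of \eqref{MPRequation} with the full potential $V=P^+_0+P^-_0$ to conclude $P^+_0\in L^1(\Sigma)$. The only difference is that where you sketch a monotonicity-formula/disjoint-disks argument to exclude the finite-area alternative, the paper simply cites \cite[Proposition 2.1]{CCZ} for the fact that each end of a complete noncompact $H$-surface in a bounded-geometry ambient has infinite area.
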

\begin{proof}
Since $-L$ has finite index and
\begin{equation*}
\begin{split}
L &= \Delta - K + (4H^2 - K_e + S) \\
 &= \Delta - K + \eps + P^+_\eps + P^- _\eps \\
 & \geq \Delta - K + \eps + P^-_{\eps},
\end{split}
\end{equation*}then $-L_\eps$, where $L_\eps := \Delta - K + \eps + P^-_{\eps}$, has
finite index. Thus, applying either Corollary \ref{Cor:QAG} if $\eps =0$ or
Corollary \ref{Cor:distance} if $\eps >0$, we obtain the result.

When $\eps >0$, we still have to remove the case when $\Sigma$ is parabolic with
finite volume. Since $\amb $ has bounded geometry and $\Sigma$ is complete and has
constant mean curvature, from \cite[Proposition 2.1]{CCZ}, we get that each end of
$\Sigma$ has infinite area, a contradiction. So, $\Sigma$ must be compact.

Let us see that $P^+_0 \in L^1 (\Sigma)$. Apply equation \eqref{MPRequation} for
$b=1$, then
\begin{equation*}
\int _{D(s)} V (1-r/s)^2 \leq 2a\pi - \frac{a(s)}{s^2},
\end{equation*}where $V:= 4H^2-K_e +S$. Set $V:= P_0 ^+ + P_0 ^-$.
Now, the above inequality can be written as
\begin{equation*}
1/2\int_{D(s/2)} P^+_0 \leq \int_{D(s)}P ^+ _0 (1-r/s)^2 \leq 2a\pi -
\frac{a(s)}{s^2} - \int_{D(s)}(1-r/s)^2 P^-_0,
\end{equation*}and since the right hand side of the above inequality is bounded as $s$ goes to infinity, we
get that $P^+_0 \in L^1(\Sigma)$.
\end{proof}

Also, we can drop the assumption about the bounded geometry of the ambient space in the next result:

\begin{theorem}\label{Cor:stableDecay}
Let $\Sigma \subset \amb$ be a complete oriented stable $H-$surface. Set $\eps > 0$ and
\begin{equation*}
P_\eps := 4H^2 - K_e +S -\eps .
\end{equation*}

Assume there exist $p_0 \in \Sigma$ and a constant $M>0$ so that
\begin{equation}\label{eqQCD}
|P_\eps (q)| \leq M /d(p_0,q).
\end{equation}

Then, $\Sigma$ is compact
\end{theorem}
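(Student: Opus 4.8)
The plan is to reduce this statement to Theorem \ref{Theo:QDC} applied to a suitable modification of the stability operator, exactly as the proof of the first part of Theorem \ref{Cor:stable} reduces to Corollaries \ref{Cor:QAG} and \ref{Cor:distance}. First I would write the stability operator in the form $L = \Delta - K + (4H^2 - K_e + S)$ and set $V := 4H^2 - K_e + S$, so that $V = \eps + P_\eps$ with $\eps > 0$ playing the role of the positive constant $c$ in Definition \ref{Def:QDecay}, and $P_\eps = V - \eps$ playing the role of $P$. The decay hypothesis \eqref{eqQCD} is precisely the statement that $L_{a,c}$, with $a = 1$ and $c = \eps > 0$, has linear decay with respect to the point $p_0$. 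Since $-L$ is nonnegative (stability) and $a = 1 > 1/4$, all the hypotheses of Theorem \ref{Theo:QDC} are met, and that theorem gives at once that $\Sigma$ is compact.

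The one point that needs a little care — and which I expect to be the main (very minor) obstacle — is the value of the constant $a$. In the geometric setting the coefficient of $-K$ in the stability operator is exactly $1$, so $a = 1 > 1/4$ and Theorem \ref{Theo:QDC} applies directly; there is no need here for a finite-index-to-stable reduction of the kind used when $\eps = 0$, because we are already assuming $\Sigma$ is stable rather than merely of finite index. I would therefore simply note that $L$ itself (not a perturbation of it) is a nonpositive operator with linear decay, positive constant $c = \eps$, and $a = 1$.

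Concretely, the proof would read: Since $\Sigma$ is stable, $-L$ is nonnegative acting on $f \in C^\infty_0(\Sigma)$, where
\begin{equation*}
L = \Delta - K + (4H^2 - K_e + S) = \Delta + V - K, \qquad V := 4H^2 - K_e + S.
\end{equation*}
Writing $V = \eps + P_\eps$, the hypothesis \eqref{eqQCD} shows that $L = L_{1,\eps}$ has linear decay in the sense of Definition \ref{Def:QDecay}, with $a = 1 > 1/4$ and $c = \eps > 0$. Applying Theorem \ref{Theo:QDC} yields that $\Sigma$ is compact, which is what we wanted.

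Note that, unlike Theorem \ref{Cor:stable}, no assumption on the ambient geometry is needed: the proof of Theorem \ref{Theo:QDC} only uses the distance estimate of \cite[Theorem 2.8]{MPR} on the region where $|P| \le c/2$ together with the elementary bound $2M/c$ on the complementary compact piece, and then Hopf--Rinow; none of this appeals to bounds on the curvature or injectivity radius of $\amb$.
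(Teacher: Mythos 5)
Your proposal is correct and coincides with the paper's own argument: the paper likewise writes $L = \Delta - K + \eps + P_\eps$, observes that hypothesis \eqref{eqQCD} means $L$ has linear decay with $c=\eps>0$ and $a=1>1/4$, and applies Theorem \ref{Theo:QDC}. Your closing remark about why no bounded-geometry assumption is needed also matches the paper's stated reason for dropping that hypothesis here.
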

\begin{proof}
Note that the stability operator $L$ can be written as
$$ L = \Delta - K + \eps + P_\eps .$$

Then, under the assumption \eqref{eqQCD}, $L$ has linear decay (see Definition
\ref{Def:QDecay}). Hence, applying Theorem \ref{Theo:QDC}, we obtain the result.
\end{proof}

We focus now on surfaces immersed on a Killing submersion $\amb$, that is, $\amb$ is
a Riemmanian submersion over a Riemannian surface $\m$ whose fibers are the
trajectories of an unit Killing field. In \cite{ES}, the geometry of this kind of
submersion is studied. In some sense, these spaces behaves like a simply-connected
homogeneous space $\hmf$.

Let $\amb$ be a three-dimensional Killing submersion, then $\pi : \amb \to \m$ over a surface $(\m , g)$ with Gauss curvature $\kappa$, and the {\it fibers}, i.e. the inverse image of a point at $\m $ by $\pi$, are the trajectories of a unit Killing vector field $\xi $, and hence geodesics.
Denote by $\meta{}{}$, $\camb$, $\ext $, $\bar R$ and $[,]$ the metric, Levi-Civita
connection, exterior product, Riemann curvature tensor and Lie bracket in $\amb$,
respectively. Moreover, associated to $\xi$, we consider the operator $J: \campo
(\amb) \to \campo (\amb)$ given by
\begin{equation*}
J X : = X \ext \xi , \, \, \, X \in \campo (\amb).
\end{equation*}

Given $X \in \campo (\amb)$, $X$ is {\it vertical} if it is always tangent to
fibers, and {\it horizontal} if always orthogonal to fibers. Moreover, if $X \in
\campo (\amb) $, we denote by $X^v$ and $X^h$ the projections onto the subspaces of
vertical and horizontal vectors respectively. In particular (see \cite[Proposition 2.6]{ES})

\begin{proposition}\label{Prop:tau}
Let $\amb$ be as above. There exists a function $\tau : \amb \to \r $ so that
\begin{equation}
\camb _X \xi = \tau \, X \ext \xi , \,
\end{equation}here $\camb$ denotes the Levi-Civita connection on $\amb$.
\end{proposition}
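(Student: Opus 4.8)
The plan is to show that the $(1,1)$-tensor field $A$ on $\amb$ defined by $A(X):=\camb_X\xi$ agrees pointwise with a scalar multiple of $J$. Three elementary facts drive the argument. First, since $\xi$ is a Killing field, the Killing equation says precisely that the symmetric part of $X\mapsto\camb_X\xi$ vanishes, so $A$ is skew-symmetric with respect to $\meta{}{}$, i.e. $\meta{\camb_X\xi}{Y}=-\meta{X}{\camb_Y\xi}$ for all $X,Y\in\campo(\amb)$. Second, differentiating $\meta{\xi}{\xi}\equiv 1$ gives $\meta{\camb_X\xi}{\xi}=0$ for every $X$, so the image of $A$ lies in the horizontal distribution $\xi^{\perp}$. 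Third, since the fibers of $\pi$ are geodesics, $\camb_\xi\xi=0$, hence $\xi\in\ker A$.

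Next I would carry out the pointwise linear algebra on the horizontal $2$-plane $\xi_q^{\perp}$ at an arbitrary $q\in\amb$. Because $|\xi|=1$ and $X\perp\xi$, for a unit $X\in\xi_q^{\perp}$ the vector $JX=X\ext\xi$ is again a unit vector in $\xi_q^{\perp}$ orthogonal to $X$; thus $J$ restricts to a $90^{\circ}$ rotation of $\xi_q^{\perp}$, and in particular $J|_{\xi_q^{\perp}}$ is a linear isomorphism with $(J|_{\xi_q^{\perp}})^2=-\mathrm{Id}$, while $J\xi_q=0$. By the second and third facts above, $A_q$ maps $\xi_q^{\perp}$ into itself and annihilates $\xi_q$; and by the first fact, $A_q|_{\xi_q^{\perp}}$ is a skew-symmetric endomorphism of the $2$-dimensional inner product space $\xi_q^{\perp}$. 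Every skew-symmetric endomorphism of an oriented Euclidean plane is a real multiple of a fixed $90^{\circ}$ rotation, so there is a unique $\tau(q)\in\r$ with $A_q|_{\xi_q^{\perp}}=\tau(q)\,J|_{\xi_q^{\perp}}$. Since both sides also kill $\xi_q$, we get $A_q=\tau(q)\,J_q$ on all of $T_q\amb$, that is, $\camb_X\xi=\tau\,X\ext\xi$.

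Finally I would verify that $q\mapsto\tau(q)$ is smooth. Near any point choose a local unit horizontal vector field $E_1$ and put $E_2:=JE_1$, so that $(E_1,E_2,\xi)$ is a local orthonormal frame; then $\tau=\meta{\camb_{E_1}\xi}{E_2}$, which is visibly a smooth function, and the tensorial identity $A=\tau J$ forces these local expressions to glue into a single globally defined smooth function on $\amb$. There is no real obstacle in this proof: the only point demanding a bit of care is the global smoothness (and well-definedness) of $\tau$, which is handled as above precisely because $J$ is a bundle isomorphism of the horizontal distribution, so $\tau$ is recovered from $A$ and $J$ by an algebraic formula valid in every local frame.
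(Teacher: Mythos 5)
Your proof is correct and complete: the three observations (skew-symmetry of $X\mapsto\camb_X\xi$ from the Killing equation, horizontality of the image from $\meta{\xi}{\xi}\equiv 1$, and $\camb_\xi\xi=0$) reduce the claim to the one-dimensionality of the space of skew-symmetric endomorphisms of an oriented Euclidean plane, and your frame computation $\tau=\meta{\camb_{E_1}\xi}{E_2}$ settles smoothness. The paper itself does not reprove this statement but quotes it from \cite{ES}, and your argument is precisely the standard one given there, so nothing further is needed.
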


Actually, it can be shown that $\tau$ only depends on $\m$ (this is a personal communication of A. Jimenez, and it will appear in a forthcoming paper). This makes natural the following notation:

\begin{definition}
A Riemannian submersion over a surface $\m $ whose fibers are the trajectories of an
unit Killing vector field $\xi$ will be called {\it Killing submersion} and denoted
by $\hm $, where $\kappa $ is the Gauss curvature of $\m $ and $\tau $ is given in
Proposition \ref{Prop:tau}.
\end{definition}

Moreover, we remind here \cite[Lemma 2.8]{ES}

\begin{lemma}\label{Lem:SectK}
Let $\hm $ be a Riemannian submersion with unit Killing vector field $\xi $. Let
$\set{X, Y} \in T \hm $ be an orthonormal basis of horizontal vector fields so that
$\set{X,Y,\xi}$ is positively oriented. Then
\begin{eqnarray}
\bar K( X\ext Y ) &=& \kappa - 3\tau ^2 , \label{SectHH}\\
\bar K( X \ext \xi ) &=& \tau ^2 . \label{SectHV}
\end{eqnarray}

Moreover, the scalar curvature $S$ of $\hm$ at $p\in \hm$ is given by
\begin{equation}\label{scalarcurv}
S(p) = \kappa - \tau ^2 .
\end{equation}
\end{lemma}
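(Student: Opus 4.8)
The final statement (Lemma \ref{Lem:SectK}) is a computation of sectional curvatures of a Killing submersion $\hm$ in terms of the base curvature $\kappa$ and the function $\tau$ from Proposition \ref{Prop:tau}. The plan is to exploit the O'Neill formulas for a Riemannian submersion $\pi : \amb \to \m$, together with the special structure coming from the fact that the fibers are the integral curves of the \emph{unit} Killing field $\xi$. The key structural input is that $\camb_X \xi = \tau\, X\ext \xi$ for every $X$, which in particular shows that the fibers are geodesics ($\camb_\xi \xi = \tau\, \xi\ext\xi = 0$) and that $\xi$ has constant norm. First I would fix a local orthonormal horizontal frame $\{X,Y\}$ with $\{X,Y,\xi\}$ positively oriented, and I would arrange that $X,Y$ are basic (i.e.\ $\pi$-related to an orthonormal frame on $\m$); this is always possible locally, and basic vector fields have vertical bracket controlled by the curvature of the connection.

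For \eqref{SectHV}, I would compute $\bar K(X\ext\xi) = \meta{\bar R(X,\xi)\xi}{X}$ directly from $\camb_X\xi = \tau J X$. Expanding $\bar R(X,\xi)\xi = \camb_X\camb_\xi\xi - \camb_\xi\camb_X\xi - \camb_{[X,\xi]}\xi$ and using $\camb_\xi\xi = 0$, the leading term becomes $-\camb_\xi(\tau J X)$; since $J\xi = 0$ and $\xi(\tau)=0$ (as $\tau$ depends only on $\m$, or alternatively since $\xi$ is Killing and $|\xi|$ is constant one checks $\xi(\tau)=0$ by differentiating the defining relation), one is left with $\tau^2$ after taking the inner product with $X$ and using $\meta{JX}{JX} = |X\ext\xi|^2 = 1$. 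For the scalar curvature formula \eqref{scalarcurv}, I would sum the sectional curvatures over the three coordinate planes: $S = 2\big(\bar K(X\ext Y) + \bar K(X\ext\xi) + \bar K(Y\ext\xi)\big)$, and with $\bar K(X\ext\xi) = \bar K(Y\ext\xi) = \tau^2$ and $\bar K(X\ext Y) = \kappa - 3\tau^2$ this gives $S = 2(\kappa - 3\tau^2 + 2\tau^2) = 2\kappa - 2\tau^2$; here I should double-check the normalization convention for $S$ used in the paper, since the stated formula is $S = \kappa - \tau^2$, which suggests the convention $S = \bar K(X\ext Y) + \bar K(X\ext\xi) + \bar K(Y\ext\xi)$ (sum over unordered planes, no factor $2$).

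For \eqref{SectHH}, which is the heart of the matter, I would use the O'Neill submersion formula relating the sectional curvature of the horizontal plane $X\ext Y$ upstairs to the Gauss curvature $\kappa$ of the base downstairs: $\bar K(X\ext Y) = \kappa\circ\pi - \tfrac{3}{4}\,|\mathcal{A}_X Y|^2$, where $\mathcal{A}$ is the O'Neill integrability tensor, $\mathcal{A}_X Y = \tfrac12 [X,Y]^v$ for basic $X,Y$. So the computation reduces to identifying $|[X,Y]^v|^2$. This is where the Killing hypothesis enters decisively: writing $[X,Y]^v = \lambda\,\xi$, one extracts $\lambda = \meta{[X,Y]}{\xi} = -\meta{Y}{\camb_X\xi} + \meta{X}{\camb_Y\xi}$ using $\camb_X Y - \camb_Y X = [X,Y]$ and metric compatibility; substituting $\camb_X\xi = \tau\, X\ext\xi$ gives $\meta{Y}{\tau\,X\ext\xi} = \tau\,\meta{Y}{-Y} = -\tau$ (up to orientation sign) and similarly the other term, so $\lambda = \pm 2\tau$ and $|[X,Y]^v|^2 = 4\tau^2$, whence $|\mathcal{A}_X Y|^2 = \tau^2$ and $\bar K(X\ext Y) = \kappa - \tfrac34\cdot 4\tau^2\cdot\tfrac{?}{}$ — I must be careful with the exact constant in O'Neill's formula and in the relation $\mathcal{A}_X Y = \tfrac12[X,Y]^v$, but the result must come out to $\kappa - 3\tau^2$.

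The main obstacle is purely bookkeeping: pinning down sign and normalization conventions (orientation of $X\ext Y$ versus $\xi$, the precise constant in O'Neill's curvature formula, whether $S$ denotes the scalar curvature as a full trace or as a sum over coordinate planes) so that all three formulas come out with exactly the coefficients stated. No deep idea is needed beyond O'Neill's formulas and the identity $\camb_X\xi = \tau J X$; the substantive point — that the connection/integrability tensor of the submersion is governed by the single function $\tau$ — is already packaged in Proposition \ref{Prop:tau}. Since this lemma is quoted verbatim from \cite[Lemma 2.8]{ES}, in the paper itself I would simply cite that reference; the sketch above is how one reconstructs it.
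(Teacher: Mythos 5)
Your proposal is correct: the paper itself gives no proof of this lemma, simply importing it from \cite[Lemma 2.8]{ES} as you note, and your reconstruction via O'Neill's submersion formula together with the identity $\camb_X\xi=\tau\,X\ext\xi$ is the standard derivation. The bookkeeping you flag does close up: $\meta{[X,Y]}{\xi}=2\tau$ gives $\bar K(X\ext Y)=\kappa-\tfrac34\cdot 4\tau^2=\kappa-3\tau^2$, the $\xi(\tau)$ term in the computation of $\bar K(X\ext\xi)$ is paired with $\meta{JX}{X}=0$ so it drops out automatically, and the paper's $S$ is indeed the half-trace $\bar K(X\ext Y)+\bar K(X\ext\xi)+\bar K(Y\ext\xi)$ (this is the convention forced by writing the stability operator as $\Delta-K+4H^2-K_e+S$), which yields $\kappa-\tau^2$.
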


Now, we can announce:

\begin{theorem}\label{Theo:stablehm}
Let $\Sigma $ be a complete oriented $H-$surface with finite index immersed in $\hm$, $\hm$ a Killing submersion of bounded geometry so that $4H^2 +c(\Sigma)\geq 0$, where
\begin{equation*}
c(\Sigma) := {\rm inf}\set{\kappa (\pi (p)) : \, p \in \Sigma} .
\end{equation*}

Set
\begin{eqnarray*}
P^- &:=& {\rm min}\set{0 , -(K_e +\tau^2) } , \\
P^+ &:=& {\rm max}\set{0 , -(K_e +\tau^2)  }.
\end{eqnarray*}

Assume $P^- \in L^1 (\Sigma). $Then, one of the following statements hold:
\begin{itemize}
\item $\Sigma$ is a minimal graph with $\pi(\Sigma)=\m$ and $c(\Sigma)>0$,
\item $4H^2 + c(\Sigma)=0$ and $\Sigma$ is either a vertical multigraph or
vertical cylinder of geodesic curvature $2H$ in $\m$.
\end{itemize}
\end{theorem}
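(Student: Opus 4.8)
The plan is to recognize the stability operator of an $H$-surface in a Killing submersion as an operator of the type studied in Section \ref{Sec:Integrable}, apply Theorem \ref{Theo:QAGintegrable} (via Theorem \ref{Cor:stable}) to pin down the conformal type, and then extract the geometric conclusions. First I would use Lemma \ref{Lem:SectK}: since the scalar curvature of $\hm$ is $S = \kappa - \tau^2$, the stability operator becomes
$$ L = \Delta - K + (4H^2 - K_e + S) = \Delta - K + (4H^2 + \kappa) - (K_e + \tau^2). $$
Thus $L = \Delta - K + c + (P^- + P^+ \text{-type terms})$ where the constant is $c := 4H^2 + c(\Sigma) \geq 0$ by hypothesis, and the variable part of the potential is $-(K_e + \tau^2)$. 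With $a = 1$, $P^- \in L^1(\Sigma)$ by assumption, and $-L$ has finite index, Theorem \ref{Cor:stable} (with $\eps = 0$) applies: $\Sigma$ is conformally equivalent to a compact Riemann surface with finitely many punctures, and moreover $P^+ \in L^1(\Sigma)$, hence $K_e + \tau^2 \in L^1(\Sigma)$ (being integrable on both the set where it is positive and the set where it is negative).

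Next I would separate two cases according to whether $c(\Sigma) = 0$ with $H = 0$ (i.e. $4H^2 + c(\Sigma) = 0$) or $4H^2 + c(\Sigma) > 0$. In the first case $c = 0$, and one should argue — using the parabolicity/conformal structure just obtained together with the integrability of $K_e + \tau^2$ — that the extrinsic geometry is rigidly constrained. The key observation is that $K_e + \tau^2$ controls the tilt of $\Sigma$ relative to the fibers: by the Gauss equation for Killing submersions one has $K = K_e + \tau^2 + (\kappa - 4\tau^2)\nu^2$ (where $\nu$ is the vertical component of the unit normal), so that on a minimal-type surface the integrability forces $\nu$ to be either identically $\pm 1$ (a vertical multigraph) or the surface to degenerate to a vertical cylinder over a curve of geodesic curvature $2H$ in $\m$; completeness plus the parabolic conformal type rules out intermediate behavior. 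In the second case $c > 0$: here I would invoke the Distance Lemma machinery — Theorem \ref{Theo:distancegeq} — to conclude $\Sigma$ is compact or parabolic with finite area, and then use that a nonconstant bounded Jacobi field (such as the vertical component of the normal, which satisfies $L\nu = 0$ when $\tau$ is appropriately controlled, or a suitable angle function) combined with the Manzano–Pérez–Rodríguez-type rigidity forces $\Sigma$ to be an entire minimal graph; the sign condition $c(\Sigma) > 0$ comes out because a complete minimal graph exists only when the base is suitably curved, and one checks $\pi(\Sigma) = \m$.

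The main obstacle I anticipate is the passage from the analytic conclusion (parabolic conformal type, integrable extrinsic curvature, finite index) to the precise geometric classification — in particular, showing that the only possibilities are the listed ones and excluding, for instance, a complete stable $H$-surface that is neither a graph nor a cylinder. This requires combining the operator-theoretic output with the structure theory of $H$-surfaces in Killing submersions: one needs a Jacobi field argument (the angle function $\nu$, or $\langle \xi, N\rangle$, is a solution of $Lf = 0$), an application of the fact that a complete parabolic surface carrying a nonnegative operator $-L$ with a bounded solution of $Lf=0$ has that solution nowhere zero (the Manzano–Pérez–Rodríguez result quoted in the introduction), and then a case analysis on the sign of $4H^2 + c(\Sigma)$. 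Verifying that $\nu$ is indeed a bounded Jacobi field in this generality — and handling the boundary term / flux arguments that distinguish "multigraph" from "entire graph" — is where the real work lies; the rest is bookkeeping with the inequalities from Section \ref{Sec:Integrable}.
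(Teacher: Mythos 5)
Your overall strategy coincides with the paper's: rewrite $L = \Delta - K + (4H^2+\kappa) - (K_e+\tau^2) \geq \Delta - K + c + P^-$ with $c := 4H^2 + c(\Sigma)$, apply the integrable-potential results of Section \ref{Sec:Integrable}, and then study the Jacobi field $\nu = \meta{\xi}{N}$, splitting on the sign of $c$ and invoking the Manzano--P\'erez--Rodr\'iguez alternative in the parabolic case $c=0$. That part of the proposal is sound (and your remark that $P^+ \in L^1(\Sigma)$ follows is correct, though it is not needed for this theorem).

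However, the case $4H^2 + c(\Sigma) > 0$ is where the first alternative of the theorem (minimality, $\pi(\Sigma)=\m$, $c(\Sigma)>0$) is actually established, and your treatment of it has three concrete gaps. First, Theorem \ref{Theo:distancegeq} only yields that $\Sigma$ is compact \emph{or} parabolic with finite area; the finite-area alternative must be excluded, which the paper does using the bounded geometry of $\hm$ together with \cite[Proposition 2.1]{CCZ} (each end of a complete $H$-surface in a bounded-geometry ambient has infinite area). Second, the possibility $\nu \equiv 0$, i.e.\ $\Sigma = \pi^{-1}(\alpha)$ a vertical cylinder, must be ruled out when $c>0$: the paper does this by noting that $K_e = -\tau^2$ on such a cylinder, so $L \geq \Delta + c$ there, and since $\pi^{-1}(\alpha)$ is intrinsically a flat plane with $\lambda_1(\Delta)=0$, the cylinder is unstable. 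Third, and most importantly, the conclusion $H=0$ does not follow from any ``rigidity of complete minimal graphs'' as you suggest; it comes from a flux argument: once $\nu \geq \eps > 0$ on the compact surface $\Sigma$, one gets $\pi(\Sigma)=\m$ compact, and the divergence theorem applied to $N^h$ over domains $\Omega \subset \m$ gives $2H\,A(\Omega) \leq L(\partial\Omega)$, hence $2H \leq \jmath(\m) = 0$ because the Cheeger constant of a compact surface vanishes; only then does $c(\Sigma) = c > 0$ follow. You explicitly flag these steps as ``where the real work lies'' but do not supply them, so the proposal is a correct outline rather than a proof. A minor correction for the case $4H^2+c(\Sigma)=0$: a vertical multigraph corresponds to $\nu$ nowhere zero, not to $\nu \equiv \pm 1$, and the parabolicity needed to invoke \cite[Corollary 2.5]{MaPRo} is supplied by the quadratic area bound of Theorem \ref{Theo:QAGintegrable}.
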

\begin{proof}
The linearized operator for the mean curvature is given by
$$ L := \Delta - K +(4H^2 - K_e + S) .$$

Now, from \eqref{scalarcurv}, we might rewrite the above inequality as
\begin{equation*}
L:= \Delta - K + 4H^2 + \kappa - (K_e + \tau ^2)\geq \Delta - K +c + P^- ,
\end{equation*}where $c:= 4H^2 + c(\Sigma) \geq 0 $. Then $-\tilde L$, where
$\tilde L := \Delta - K + c + P^-$, is nonnegative acting on $f\in C^\infty _0
(\Sigma)$ and has integrable potential.

If $4H^2+c(\Sigma)>0$, applying Theorem \ref{Theo:distancegeq}, $\Sigma$ is either
compact or parabolic with finite area.

Let us prove now that $\Sigma$ can not be parabolic with finite area. Since $\hm$
has bounded geometry and $\Sigma$ is complete and has constant mean curvature, from
\cite[Proposition 2.1]{CCZ}, we get that $\Sigma$ has infinite area. So, $\Sigma$
must be compact.

Set $\nu := \meta{\xi }{N}$, where $N$ is the unit normal vector field along
$\Sigma$, $\nu$ is a bounded Jacobi function, i.e., $L\nu =0$. Since $\Sigma$ is
stable and compact, elementary elliptic theory asserts that either $\nu$ vanishes
identically or $\nu >0 $.

If $\nu $ vanishes identically, $\Sigma := \pi ^{-1}(\alpha)$, i.e., it is a
vertical cylinder over a complete curve $\alpha \subset \m$ of geodesic curvature
$2H$. Then, the stability operator of $\Sigma := \pi ^{-1}(\alpha)$ is given by
\begin{equation*}
L = \Delta + (4H^2 + \kappa)
\end{equation*}since $K_e = -\tau^2$ on a vertical cylinder (see \cite[Proposition 2.10]{ES}). Then
\begin{equation*}
L \geq \Delta + c ,
\end{equation*}where $c:= 4H^2 + c(\Sigma) >0$.

Let $\lambda _1 (L)$ denote the first eigenvalue of $-L$. Thus,
\begin{equation*}
\lambda _1 (L ) \leq \lambda _1 (\Delta + c) = \lambda _1 (\Delta) -c = -c <0 ,
\end{equation*}where $\lambda _1 (\Delta) =0$ since $\Sigma$ is isometrically a
plane. So, $\Sigma$ is unstable, a contradiction.

Therefore, $\nu$ never vanishes. Since $\Sigma$ is compact, there exists $\eps >0$ so that
$\nu \geq \eps >0$, and hence $\pi (\Sigma) = \m $. Let $\jmath (\m )$ denote the Cheeger constant, i.e.,
$$ \jmath (\m) := {\inf}_{\Omega \subset \m}\set{\frac{A(\Omega)}{L(\partial \Omega)}},
$$where $\Omega $ varies over open domains on $\m$ with compact closure and smooth
boundary.

Let $\Omega \subset \m$ be a relatively compact domain with smooth boundary
$\partial \Omega$. Since $\nu \geq \eps >0$, there exists a compact set $\Sigma _0
\subset \Sigma $ which is a $H-$graph over $\Omega$. From the Divergence Theorem
$$ 2H A(\Omega) = \int_ \Omega  {\rm div}(N^h) = \int _{\partial \Omega}
g(N^h , \eta) \leq L(\partial \Omega),$$where $A(\Omega)$ and $L(\partial \Omega)$
are the area and the length of $\Omega$ and $\partial \Omega$ (w.r.t. $g$)
respectively. Moreover, ${\rm div}$ is the divergence operator on $(\m ,g)$. Thus,
\begin{equation}\label{Cheeger}
2H \leq \jmath (\m ).
\end{equation}

Since $\m$ is compact (recall $\pi(\Sigma) = \m$ and $\Sigma $ is compact),
$\jmath(\m)=0$. So, from \eqref{Cheeger} and $4H^2 + c(\Sigma)>0$, we get $H =0$ and
$c(\Sigma)>0$.

If $4H^2+c(\Sigma)=0$, $\Sigma$ is parabolic. Then, \cite[Corollary 2.5]{MaPRo}
asserts that $\nu$ vanishes identically or never vanishes. That is, $\Sigma$ is
either a vertical cylinder over a complete curve of geodesic curvature $2H$ in $\m$,
or $\Sigma$ is a complete multigraph.

This finishes the proof.
\end{proof}

Also,

\begin{theorem}\label{Theo:stablehmQCD}
Let $\Sigma $ be a complete oriented stable $H-$surface in $\hm$ so that $4H^2 +
c(\Sigma) > 0$, where
\begin{equation*}
c(\Sigma) := {\rm inf}\set{\kappa (\pi (p)) : \, p \in \Sigma} .
\end{equation*}

Set $P := K_e +\tau ^2$, assume there exist a point $p_0 \in \Sigma$ and a constant $M>0$ so that
\begin{equation*}
|P(q)| \leq M/d(p_0,q).
\end{equation*}

Then, $\Sigma$ is a minimal graph with $\pi(\Sigma)=\m$ and $c(\Sigma) >0$.
\end{theorem}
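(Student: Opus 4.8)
I plan to mirror the proof of Theorem \ref{Theo:stablehm}, the one change being that the linear decay hypothesis lets me invoke Theorem \ref{Theo:QDC} in place of Theorem \ref{Theo:distancegeq}, which yields compactness of $\Sigma$ outright.

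First I would rewrite the stability operator $L = \Delta - K + (4H^2 - K_e + S)$. By \eqref{scalarcurv} one has $S = \kappa - \tau^2$, hence
\begin{equation*}
L = \Delta - K + (4H^2 + \kappa) - P , \qquad P = K_e + \tau^2 .
\end{equation*}
Set $c := 4H^2 + c(\Sigma)$, which is positive by hypothesis. Since $\kappa(\pi(p)) \geq c(\Sigma)$ for every $p \in \Sigma$, the function $4H^2 + \kappa - c$ is nonnegative on $\Sigma$, so the index form of $L$ dominates that of $\tilde L := \Delta - K + c - P$; as $\Sigma$ is stable ($-L$ nonnegative), $-\tilde L$ is nonnegative on $C^\infty_0(\Sigma)$ as well. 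The operator $\tilde L$ is of type $L_{1,c}$ with $a = 1 > 1/4$ and $c > 0$, and its potential $c + (-P)$ has linear decay because $|-P(q)| = |P(q)| \leq M/d(p_0, q)$. Theorem \ref{Theo:QDC} therefore gives that $\Sigma$ is compact. Note that, in contrast with Theorem \ref{Theo:stablehm}, no bounded geometry assumption is needed here, since Theorem \ref{Theo:QDC} produces compactness directly, leaving no ``parabolic with finite area'' alternative to exclude via \cite[Proposition 2.1]{CCZ}.

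With $\Sigma$ compact, the remainder of the argument is verbatim from the proof of Theorem \ref{Theo:stablehm}. The function $\nu := \meta{\xi}{N}$ is a bounded Jacobi function, $L\nu = 0$, so stability together with compactness forces either $\nu \equiv 0$ or $\nu > 0$. If $\nu \equiv 0$ then $\Sigma = \pi^{-1}(\alpha)$ is a vertical cylinder over a complete curve of geodesic curvature $2H$, on which $K_e = -\tau^2$ by \cite[Proposition 2.10]{ES}; hence $L = \Delta + (4H^2 + \kappa) \geq \Delta + c$ with $c > 0$, which forces $\lambda_1(-L) \leq -c < 0$, contradicting stability. So $\nu > 0$, and compactness gives $\nu \geq \eps > 0$ for some $\eps$; thus $\pi|_\Sigma$ is a local diffeomorphism with open and closed image, i.e. $\pi(\Sigma) = \m$, the base $\m$ is compact, and $\Sigma$ is an entire $H$-graph. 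Finally, for a relatively compact smooth domain $\Omega \subset \m$, applying the divergence theorem to $N^h$ over the graph above $\Omega$ gives $2H\, A(\Omega) = \int_{\partial \Omega} g(N^h, \eta) \leq L(\partial \Omega)$, so $2H \leq \jmath(\m)$; since $\m$ is compact, $\jmath(\m) = 0$, hence $H = 0$ and $c(\Sigma) = 4H^2 + c(\Sigma) > 0$. Therefore $\Sigma$ is a minimal graph with $\pi(\Sigma) = \m$ and $c(\Sigma) > 0$.

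The only point with any novelty relative to Theorem \ref{Theo:stablehm} is the reduction in the second paragraph: one must split off the nonnegative term $4H^2 + \kappa - c$ so that the residual potential is precisely of linear-decay type, and then invoke Theorem \ref{Theo:QDC} rather than Theorem \ref{Theo:distancegeq}. I do not foresee a real obstacle; the mildly delicate observation is just that bounded geometry can be dispensed with, as noted above.
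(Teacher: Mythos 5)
Your proposal is correct and follows the paper's own argument essentially verbatim: the same decomposition $L \geq \tilde L := \Delta - K + c - P$ with $c = 4H^2 + c(\Sigma) > 0$, the same appeal to Theorem \ref{Theo:QDC} to get compactness, and then the identical Jacobi-function/Cheeger-constant argument from Theorem \ref{Theo:stablehm} to conclude $H=0$, $\pi(\Sigma)=\m$ and $c(\Sigma)>0$. Your remark that bounded geometry is not needed here (since Theorem \ref{Theo:QDC} yields compactness directly, with no parabolic finite-area case to exclude) is also consistent with the paper.
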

\begin{proof}
We might write the stability operator $L$ as
\begin{equation*}
L = \Delta - K + (4H^2 +\kappa) - (K_e + \tau^2),
\end{equation*}then $L \geq \tilde L$, where
\begin{equation*}
\tilde L := \Delta - K + c - P ,
\end{equation*}where $c:=4H^2 +c(\Sigma)>0$. Thus, $\tilde L$ is a nonnegative operator
with linear decay. So, from Theorem \ref{Theo:QDC}, $\Sigma$ is compact. Therefore, arguing as in Theorem \ref{Theo:stablehm}, we obtain that $H=0$, $c(\Sigma)>0$ and $\pi (\Sigma) = \m$.
\end{proof}

To finish, we give some consequences of the above results for stable $H-$surface in
$\hmf$. Nelli-Rosenberg \cite{NR} proved that there are no stable $H-$surfaces,
$H>1/\sqrt{3}$, in $\hr$, by proving a {\it Distance Lemma}. In general, thanks to a
{\it Distance Lemma}, i.e. an intrinsic estimate of the distance to the boundary,
Rosenberg \cite{R2} proved that there are no complete stable $H-$surfaces in $\hmf$
provided $H^2 > \frac{\tau ^2 - \kappa}{3}$, unless $\s^2 (\kappa)\times \set{0}$ in
$\s ^2 (\kappa)\times \r$. Moreover, we might assume that $\kappa < \tau ^2$, since
the case $\kappa \geq \tau ^2 $ is done (see \cite{R2}), a stable complete
$H-$surface in $\hmf$, $\kappa \geq \tau ^2$ is a slice in $\s ^2 (\kappa )\times
\r$. When, $\kappa < \tau ^2 $, one can improves Rosenberg's result using a
compactness argument. In \cite{MaPRo}, the authors proved that there exists $\eps
>0$ such that $H^2< \frac{\tau ^2 - \kappa}{3}-\eps$ for any complete stable
$H-$surface in $\hmf$ with $\kappa < \tau^2$.

Here, we prove that there are no complete $H-$surfaces, $H>1/2$, in $\hr$ under some
conditions on the extrinsic curvature.

\begin{corollary}\label{Cor:hr}
Let $\Sigma \subset \hr$ be an oriented complete stable $H-$surface satisfying one of the following conditions:
\begin{itemize}
\item $H\geq 1/2$ and ${\rm max}\set{0,K_e} \in L^1 (\Sigma)$,
\item $H>1/2$ and there exist a point $p_0 \in \Sigma$ and a constant $M>0$ so that
\begin{equation*}
|K_e (q)| \leq M/d(p_0,q).
\end{equation*}
\end{itemize}

Then, $H=1/2$ and $\Sigma$ is either a vertical horocylinder (i.e. a vertical
cylinder over a horocycle in $\h^2$) or an entire vertical graph.
\end{corollary}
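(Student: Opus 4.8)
The whole statement is an instance of Theorems~\ref{Theo:stablehm} and~\ref{Theo:stablehmQCD} once the ambient space is recognised as a Killing submersion, so I would start by recording the elementary normalisations. The space $\hr$ is a Killing submersion over $\m=\mathbb{H}^2$ with base Gauss curvature $\kappa\equiv-1$ and bundle curvature $\tau\equiv 0$ (the fibres being the $\mathbb{R}$--factors); being homogeneous, it has bounded geometry. Hence $c(\Sigma)=\inf\{\kappa(\pi(p)):p\in\Sigma\}=-1$, so $4H^2+c(\Sigma)=4H^2-1$, and, normalising the orientation so that $H\ge 0$, the condition $4H^2+c(\Sigma)\ge 0$ is equivalent to $H\ge 1/2$ and the strict one to $H>1/2$. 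Moreover, since $\tau=0$, the potential in Theorem~\ref{Theo:stablehmQCD} is $P=K_e$, while in Theorem~\ref{Theo:stablehm} one has $P^-=\min\{0,-K_e\}=-\max\{0,K_e\}$; in particular $P^-\in L^1(\Sigma)$ is exactly $\max\{0,K_e\}\in L^1(\Sigma)$.

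Under the second set of hypotheses ($H>1/2$ and $|K_e(q)|\le M/d(p_0,q)$) I would simply apply Theorem~\ref{Theo:stablehmQCD}: it forces $\Sigma$ to be a minimal graph with $\pi(\Sigma)=\m$ and $c(\Sigma)>0$, which is absurd since $c(\Sigma)=-1<0$. Hence there is \emph{no} such surface, and the asserted conclusion holds vacuously in this case.

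Under the first set of hypotheses ($H\ge 1/2$ and $\max\{0,K_e\}\in L^1(\Sigma)$) stability gives finite index, and by the discussion above all the assumptions of Theorem~\ref{Theo:stablehm} are met. Of its two alternatives, the first (a minimal graph with $c(\Sigma)>0$) is excluded because $c(\Sigma)=-1$; so the second holds, giving $4H^2+c(\Sigma)=0$, i.e.\ $H=1/2$, and $\Sigma$ is either a vertical cylinder of geodesic curvature $2H=1$ over a complete curve $\alpha\subset\mathbb{H}^2$, or a complete vertical multigraph. A complete curve of constant geodesic curvature $1$ in $\mathbb{H}^2$ is a horocycle, so in the first case $\Sigma$ is precisely a vertical horocylinder.

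It remains to upgrade the multigraph case to an \emph{entire} vertical graph. There $\nu=\meta{\xi}{N}$ has no zeros, so after fixing the orientation $\nu>0$ and $\pi|_\Sigma:\Sigma\to\mathbb{H}^2$ is a local diffeomorphism; I would show it is in fact a covering map, and then, $\mathbb{H}^2$ being simply connected and $\Sigma$ connected, a global diffeomorphism, so that $\Sigma$ is the graph of a function on all of $\mathbb{H}^2$. This last step is the only point not delivered verbatim by Theorems~\ref{Theo:stablehm}--\ref{Theo:stablehmQCD}, and I expect it to be the main obstacle: a priori $\nu$ may tend to $0$ at infinity, so a finite--length curve in $\mathbb{H}^2$ need not lift to a finite--length curve in $\Sigma$, and completeness alone does not give the path--lifting property. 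I would resolve it using the parabolicity of $\Sigma$ (already established inside the proof of Theorem~\ref{Theo:stablehm} when $4H^2+c(\Sigma)=0$) together with the structure theory of $H=1/2$ multigraphs in $\mathbb{H}^2\times\mathbb{R}$ (cf.\ \cite{ER,MaPRo}).
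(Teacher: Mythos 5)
Your reduction to Theorems~\ref{Theo:stablehm} and \ref{Theo:stablehmQCD} is exactly the paper's route, and the bookkeeping is right: $\hr=\mathbb{E}(-1,0)$ is a Killing submersion of bounded geometry with $c(\Sigma)=-1$, so $4H^2+c(\Sigma)\ge 0$ is $H\ge 1/2$, $P^-=-\max\{0,K_e\}$, and under the second hypothesis Theorem~\ref{Theo:stablehmQCD} yields a contradiction with $c(\Sigma)=-1<0$, so that case is a nonexistence statement (this is in fact a more careful reading than the paper's own two-line proof, which lumps both cases together). Under the first hypothesis you correctly land in the alternative $H=1/2$ with $\Sigma$ either a vertical cylinder over a complete curve of geodesic curvature $1$ (a horocycle) or a complete vertical multigraph.

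The genuine gap is the last step, which you yourself flag: upgrading the complete $H=1/2$ multigraph to an entire vertical graph. Your proposed route (show $\pi|_\Sigma$ is a covering map and conclude by simple connectedness of $\mathbb{H}^2$) does not work as stated, for the reason you give --- $\nu$ may decay at infinity, so completeness of $\Sigma$ does not yield the path-lifting property --- and the references you point to (\cite{ER}, \cite{MaPRo}) do not contain the needed structure result. The missing ingredient is precisely the theorem of Hauswirth--Rosenberg--Spruck \cite{HRS}, which proves that a complete multigraph of mean curvature $1/2$ in $\hr$ is an entire vertical graph; the paper closes the proof by citing it. Without that input (or an independent proof of it, which is a substantial result in its own right), your argument does not establish the ``entire graph'' alternative.
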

\begin{proof}
We apply either Theorem \ref{Theo:stablehm} or Theorem \ref{Theo:stablehmQCD} depending on
the condition that $\Sigma$ verifies. In any case, $4H^2+1=0$ and $\Sigma$ is either
a vertical cylinder over a complete curve of geodesic curvature $2H=1$, that is, a
horocycle in $\h ^2$, or $\Sigma$ is a complete multigraph. In the latter case,
Hauswirth-Rosenberg-Spruck \cite{HRS} proved that $\Sigma$ is an entire graph.
\end{proof}

And, for stable $H-$surfaces in either the Heisenberg space or $\widetilde{{\rm
PSL}(2,\r)}$, we obtain

\begin{corollary}\label{Cor:homog}
Let $\Sigma \subset \hmf $, $\tau \neq 0$, be an oriented complete stable
$H-$surface satisfying one of the following conditions:
\begin{itemize}
\item $4H^2 +\kappa \geq 0$ and $\nu^2 \in L^1 (\Sigma)$,
\item $4H^2 +\kappa > 0$ and there exist a point $p_0 \in \Sigma$ and a constant $M>0$ so that
\begin{equation*}
|\nu (p)|^2 \leq M/d(p_0,q).
\end{equation*}
\end{itemize}

Then:
\begin{itemize}
\item In $\s ^3_{Berger}$, there are no such a stable $H-$surface.
\item In ${\rm Nil}_3$, $H=0$ and $\Sigma$ is either a vertical plane (i.e. a vertical cylinder over a
straight line in $\r^2$) or an entire vertical graph.
\item In $\widetilde{{\rm PSL}(2,\r)}$, $H=\sqrt{-\kappa}/2$ and $\Sigma$ is either a vertical
horocylinder (i.e. a vertical cylinder over a horocycle in $\h ^2 (\kappa)$) or an entire graph.
\end{itemize}
\end{corollary}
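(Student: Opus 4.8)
The plan is to specialize Theorems \ref{Theo:stablehm} and \ref{Theo:stablehmQCD} to $\hmf$, the essential point being that when $\kappa,\tau$ are ambient constants the hypotheses on $\nu$ are exactly what the machinery of Sections \ref{Sec:Integrable}--\ref{Sec:Decay} requires. Here $c(\Sigma)=\kappa$, and the Gauss equation together with Lemma \ref{Lem:SectK} gives the pointwise identity
$$ K = K_e + \tau^2 + (\kappa - 4\tau^2)\,\nu^2 ,$$
so that $K_e+\tau^2 = K - (\kappa-4\tau^2)\nu^2$. Substituting this into the stability operator $L = \Delta - K + (4H^2+\kappa) - (K_e+\tau^2)$ yields
$$ L = \Delta - 2K + (4H^2+\kappa) + (\kappa-4\tau^2)\,\nu^2 ,$$
a Schr\"odinger-type operator with curvature coefficient $a=2>1/4$ and potential $V=c+P$, $c:=4H^2+\kappa$ and $P:=(\kappa-4\tau^2)\nu^2$. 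Since $0\le\nu^2\le1$, the first hypothesis ($4H^2+\kappa\ge0$ and $\nu^2\in L^1(\Sigma)$) makes $c\ge0$ and $P\in L^1(\Sigma)$, while the second ($4H^2+\kappa>0$ and $|\nu|^2\le M/d(p_0,\cdot)$) makes $c>0$ and $|P|\le|\kappa-4\tau^2|\,M/d(p_0,\cdot)$.

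Next I would feed this into the abstract results. In the linear-decay case $-L$ has linear decay with $c>0$ and $a>1/4$, so Theorem \ref{Theo:QDC} gives immediately that $\Sigma$ is compact, uniformly in the sign of $\kappa-4\tau^2$. In the integrable case I would first suppose $\kappa-4\tau^2<0$ (automatic for $\mathrm{Nil}_3$ and $\widetilde{\mathrm{PSL}(2,\r)}$), so $P\le0$ and $-L$ has integrable potential: if $c>0$, Theorem \ref{Theo:distancegeq} forces $\Sigma$ to be compact or parabolic with finite area, and the latter is impossible since $\hmf$ has bounded geometry and $\Sigma$ is a complete $H$-surface, so every end of $\Sigma$ has infinite area by \cite[Proposition 2.1]{CCZ}; hence $\Sigma$ is compact; if $c=0$ (which forces $\kappa\le0$), Theorem \ref{Theo:QAGintegrable} shows $\Sigma$ is conformally a compact Riemann surface with finitely many points removed, hence parabolic. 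When $\kappa-4\tau^2\ge0$ (which can only occur in $\s^3_{Berger}$, where $c=4H^2+\kappa>0$) I would simply drop the nonnegative term $P$: then $-L\ge-(\Delta-2K+c)$ is still stable, the latter has a positive constant potential, and Theorem \ref{Theo:distancegeq} again gives compactness after discarding the finite-area alternative as above.

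Once $\Sigma$ is known to be compact or parabolic, I would reproduce the Jacobi-field argument of Theorem \ref{Theo:stablehm}: $\nu=\meta{\xi}{N}$ is a bounded solution of $L\nu=0$, so by elementary elliptic theory (compact case) or \cite[Corollary 2.5]{MaPRo} (parabolic case) either $\nu\equiv0$ or $\nu$ never vanishes. If $\nu\equiv0$, then $\Sigma=\pi^{-1}(\alpha)$ is a vertical cylinder over a complete curve of geodesic curvature $2H$ in $\m$; its stability operator is $\Delta+(4H^2+\kappa)$, which on a complete flat surface is unstable unless $4H^2+\kappa=0$; this excludes $\s^3_{Berger}$ and leaves, in $\mathrm{Nil}_3$, $H=0$ over a geodesic of $\r^2$ (a vertical plane), and, in $\widetilde{\mathrm{PSL}(2,\r)}$, $H=\sqrt{-\kappa}/2$ over a curve of geodesic curvature $\sqrt{-\kappa}$, i.e.\ a horocycle of $\h^2(\kappa)$ (a vertical horocylinder). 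If $\nu$ never vanishes, $\Sigma$ is a vertical multigraph over $\m$; when $\Sigma$ is compact this makes $\pi(\Sigma)=\m$ compact, which is impossible for the noncompact bases $\r^2$ and $\h^2(\kappa)$ and impossible for the Hopf fibration $\s^3_{Berger}\to\s^2(\kappa)$, which admits no global section; when $\Sigma$ is parabolic (so $4H^2+\kappa=0$) it is a complete multigraph, which by the Hauswirth-Rosenberg-Spruck-type entire-graph theorems for $\mathrm{Nil}_3$ and $\widetilde{\mathrm{PSL}(2,\r)}$ is an entire graph. Reading this off in each of the three spaces produces the stated trichotomy.

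The delicate part will be the Berger sphere: there $c(\Sigma)=\kappa>0$ kills every ``$4H^2+c(\Sigma)=0$'' outcome, so the whole conclusion reduces to the non-existence of a compact stable $H$-surface projecting onto $\s^2(\kappa)$, and that in turn rests on two genuinely ambient facts, the non-triviality of the Hopf bundle (no global section) and the instability of vertical flat tori $\pi^{-1}(\alpha)$; I would also want to verify carefully that the substitution of the first paragraph stays legitimate when $\kappa-4\tau^2>0$, which is precisely why $P$ is then handled by a comparison with a constant-potential operator rather than by placing it inside the potential of an ``integrable'' operator.
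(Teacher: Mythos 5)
Your proposal is correct and follows the same backbone as the paper's proof: rewrite the stability operator via the Gauss equation as $L=\Delta-2K+(4H^2+\kappa)+(\kappa-4\tau^2)\nu^2$, treat it as a nonpositive operator with $a=2$ and potential $c+P$, $c=4H^2+\kappa$, $P=(\kappa-4\tau^2)\nu^2$, and then run the compact/parabolic dichotomy for the Jacobi function $\nu$. The differences are in how the middle is executed, and they are worth recording. The paper simply says ``apply Theorem \ref{Theo:stablehm} or Theorem \ref{Theo:stablehmQCD}''; strictly speaking the hypotheses of those theorems are phrased in terms of $\min\set{0,-(K_e+\tau^2)}\in L^1(\Sigma)$ (resp.\ a decay bound on $K_e+\tau^2$), which is not literally what the corollary assumes, so what is really happening is that their proofs are re-run with the new decomposition in which $a=2$ and the variable part of the potential is $(\kappa-4\tau^2)\nu^2$. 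You do exactly that, feeding the rewritten operator directly into Theorems \ref{Theo:QAGintegrable}, \ref{Theo:distancegeq} and \ref{Theo:QDC} and then reproducing the elliptic/parabolic argument for $\nu$; your explicit case split on the sign of $\kappa-4\tau^2$ (discarding $P\geq 0$ by comparison with a constant-potential operator when $\kappa-4\tau^2\geq 0$, which can only happen in $\s^3_{Berger}$) is a point the paper leaves implicit, and your version is the more careful one. The other genuine divergence is the Berger sphere: the paper disposes of it by citing the non-existence of compact oriented stable $H$-surfaces in $\s^3_{Berger}$ from the Meeks--P\'erez--Ros survey, whereas you give a self-contained argument (a compact surface with $\nu$ nowhere zero would be a global section of the Hopf fibration, which does not exist, and the flat vertical tori $\pi^{-1}(\alpha)$ are unstable since $\lambda_1(-\Delta-(4H^2+\kappa))=-(4H^2+\kappa)<0$). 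Both routes are valid; yours trades an external citation for a short topological argument and makes the logical dependence on the abstract Sections \ref{Sec:Integrable}--\ref{Sec:Decay} completely explicit.
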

\begin{proof}
Note that, by the Gauss equation for a surface immersed in $\hmf$ (see \cite{D}),
i.e.,
$$ K = K_e + \tau ^2 + (\kappa - 4\tau ^2)\nu ^2 ,$$we might write the stability
operator as
$$ L = \Delta - 2K + (4H^2+\kappa) + (\kappa - 4\tau ^2)\nu ^2 .$$

So, we apply Theorem \ref{Theo:stablehm} or Theorem \ref{Theo:stablehmQCD} depending
on the condition that $\Sigma$ verifies. We get:

\begin{itemize}
\item In $\s ^3 _{Berger}$, $4H^2 + \kappa >0$ and so $\Sigma$ is compact, but there are no
compact, oriented stable $H-$surfaces in $\s ^3 _{Berger}$ (see \cite[Corollary
9.6]{MPR}).

\item In ${\rm Nil}_3$, $H=0$ and $\Sigma$ is either a vertical cylinder over a complete
curve of geodesic curvature $H=0$, that is, a straight line in $\r^2$, or $\Sigma$
is a complete multigraph. In the latter case, Daniel-Hauswirth \cite{DH} proved that
$\Sigma$ is an entire graph.

\item In $\widetilde{{\rm PSL}(2,\r)}$, the proof is similar as above. Now, $\Sigma$ is an entire graph follows from \cite{DHM}.
\end{itemize}
\end{proof}

\section{Appendix}

We recall here some results we have used along the paper for the sake of
completeness. The first one is a general inequality for $I(f)$ (see
\eqref{varcharacsurface}) following the method developed by T. Colding and W.
Minicozzi in \cite{CM}. We establish here the formula how it was stated in
\cite[Lemma 3.1]{ER}, but the proof can be found in \cite{Ca}.

We denote
\begin{eqnarray*}
l(s) &=& {\rm Length}(\partial D(s)) \\
a(s) &=& {\rm Area}(D(s))\\
K(s) &=& \int _{D(s)} K \\
\chi (s)&=& \text{Euler characteristic of } D(s)
\end{eqnarray*}

\begin{lemma}[Colding-Minicozzi stability inequality]\label{Lem:ColdingMinicozzi}
Let $\Sigma $ be a Riemannian surface possibly with boundary and $K \not \equiv 0$.
Let us fix a point $p_0\in \Sigma$ and positive numbers $0 \leq \eps < s $ such that
$\overline{D(s)}\cap \partial \Sigma = \emptyset$. Let us consider the differential
operator $L_a = \Delta + V - a K$, where $V \in C^{\infty}(\Sigma)$ and $a$ is a
positive constant, acting on $f \in C^{\infty}_0 (\Sigma)$. Let $f: D(s) \To \r $ a
nonnegative radial function, i.e. $f \equiv f(r)$, such that

$$\begin{matrix}
f(r) \equiv  1, & \text{ for } r \leq \eps \\
f(r) \equiv  0, & \text{ for } r \geq s \\
f'(r) \leq 0 , & \text{ for } \eps < r < s
\end{matrix}$$

Then, the following holds
\begin{equation}\label{CMineq}
 I(f) \leq 2a \left( \pi G(s) - f'(\eps) l(\eps)\right) -\int _{D(s)} V f(r)^2
 + \int _{\eps}^s \left\{ (1-2a)f'(r) ^2 - 2 a f(r)f''(r)\right\}l(r) ,
\end{equation}where
\begin{equation*}
G(s) := -\int _\eps ^s (f(r)^2)' \chi (r) .
\end{equation*}
\end{lemma}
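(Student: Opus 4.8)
The plan is to compute the index form $I(f)$ directly from its definition \eqref{varcharacsurface}, exploiting that $f$ is radial, and then to convert the Gaussian-curvature term into boundary data by integration by parts together with the Gauss--Bonnet formula. Since $f\equiv f(r)$ we have $\norm{\nabla f}^2=f'(r)^2$ almost everywhere (the distance function satisfies $\norm{\nabla r}=1$ off the cut locus), so the coarea formula gives $\int_\Sigma \norm{\nabla f}^2=\int_\eps^s f'(r)^2\,l(r)\,dr$ (the lower limit is $\eps$ because $f'\equiv 0$ for $r<\eps$), while $-\int_\Sigma V f^2=-\int_{D(s)}V f(r)^2$ reproduces verbatim the potential term of \eqref{CMineq}. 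Everything therefore reduces to bounding the curvature term $a\int_{D(s)}K f(r)^2$ from above.

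First I would rewrite this term using the radial primitive $K(r)=\int_{D(r)}K$. By the coarea formula $\frac{d}{dr}K(r)=\int_{\partial D(r)}K\,dl$, hence $\int_{D(s)}K f(r)^2=\int_0^s f(r)^2\,K'(r)\,dr$; integrating by parts and using $f(s)=0$ together with $K(0)=0$ annihilates the boundary terms and yields $\int_{D(s)}K f(r)^2=-\int_\eps^s (f(r)^2)'\,K(r)\,dr$. The decisive geometric input enters here: combining the Gauss--Bonnet formula on $D(r)$, namely $K(r)+\int_{\partial D(r)}k_g\,dl=2\pi\chi(r)$ for the geodesic curvature $k_g$ of the distance circle, with the first-variation-of-length inequality $l'(r)\leq\int_{\partial D(r)}k_g\,dl$, produces the pointwise bound $K(r)\leq 2\pi\chi(r)-l'(r)$. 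Because $f\geq 0$ and $f'\leq 0$ force the weight $-(f(r)^2)'=-2f(r)f'(r)\geq 0$, multiplying this bound by that nonnegative weight preserves the direction of the inequality, giving $\int_{D(s)}K f(r)^2\leq 2\pi G(s)+\int_\eps^s (f(r)^2)'\,l'(r)\,dr$, where I used $G(s)=-\int_\eps^s (f(r)^2)'\chi(r)\,dr$.

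It remains to integrate $\int_\eps^s (f(r)^2)'\,l'(r)\,dr$ by parts once more. The boundary contribution at $r=s$ vanishes since $(f(s)^2)'=2f(s)f'(s)=0$, while at $r=\eps$ it produces $-2f'(\eps)l(\eps)$ using $f(\eps)=1$; the interior term is $-\int_\eps^s (f(r)^2)''\,l(r)\,dr$ with $(f(r)^2)''=2f'(r)^2+2f(r)f''(r)$. Multiplying the resulting estimate for $\int_{D(s)}K f(r)^2$ by $a>0$ and substituting into $I(f)=\int_\eps^s f'(r)^2 l(r)\,dr-\int_{D(s)}V f(r)^2+a\int_{D(s)}K f(r)^2$, the two $f'(r)^2 l(r)$ contributions combine into $(1-2a)f'(r)^2 l(r)$ and the Hessian term contributes $-2a f(r)f''(r)l(r)$, which is exactly \eqref{CMineq}. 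The main obstacle is the rigorous justification of $l'(r)\leq 2\pi\chi(r)-K(r)$ across the cut locus, where the distance circles are not smooth: $l(r)$ is then only absolutely continuous, and the term missing from the naive first variation of length is precisely the nonnegative cut-locus jump, which is exactly what converts the formal equality into the inequality \eqref{CMineq}. This classical analysis of the distance function on a surface (in the spirit of Fiala, Hartman and Shiohama--Tanaka) is carried out in \cite{Ca}, which I would invoke for the technical details while keeping the computation above as the skeleton of the argument.
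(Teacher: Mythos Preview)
Your proposal is correct and follows exactly the standard Colding--Minicozzi/Castillon argument: rewrite $I(f)$ via the coarea formula, replace the curvature integral by $-\int_\eps^s (f^2)'K(r)\,dr$, apply the Gauss--Bonnet/first-variation bound $K(r)\le 2\pi\chi(r)-l'(r)$ against the nonnegative weight $-(f^2)'$, and integrate by parts once more. The paper does not give its own proof of this lemma but simply refers to \cite{Ca}, so your computation is precisely the argument being cited.
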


Moreover, we should mention that we have relaxed here the hypothesis on $V$, but
from the proof, we can see that we do not really use the fact that $V\geq 0$.

It is well known that the kind of results we can obtain for nonnegative operator of
the form
$$ L_a : = \Delta + V - a \, K $$where $V \geq 0$ and $a$ is a positive constant,
depend strongly on the value of $a$.

The most studied case is when $a > 1/4$ (see \cite{Ca} or \cite{MPR}). When $a>1/4$,
we use the following radial function
\begin{equation}\label{functionageq14}
f(r)=\left\{\begin{matrix}
\left( 1-\frac{r}{s}\right) ^ b & 0 \leq r \leq s \\[3mm]
0 & r \geq s
\end{matrix}\right.
\end{equation}where $s>0$, $b\geq 1$ and $r$ is the radial distance of a point $p$ in $D(s)$ to
$p_0$. So now, we establish a formula developed by Meeks-P\'{e}rez-Ros \cite{MPR}.
Such a formula follows from Lemma \ref{Lem:ColdingMinicozzi} with the test function
given by \eqref{functionageq14}.

\begin{corollary}\label{Cor:ageq14}
Let $\Sigma $ be a Riemannian surface possibly with boundary and $K \not \equiv 0$.
Fix a point $p_0\in \Sigma$ and a positive number $ s > 0$ such that
$\overline{D(s)}\cap \partial \Sigma = \emptyset$. Suppose that the differential
operator $L_a = \Delta + V - a K$ is nonpositive on $ C^{\infty}_0 (\Sigma)$, where
$V \in C^{\infty}(\Sigma)$ and $a > 1/4$ is a constant. For $b\geq 1$, we have
\begin{equation}\label{MPRequation}
\int _{D(s)} \left( 1- r/s\right)^{2b} V \leq 2a G (s) \pi
+\frac{b(b(1-4a)+2a)}{s^2}\int _{0}^s \left( 1- r/s\right)^{2b-2} l(r) ,
\end{equation}where
\begin{equation}\label{G}
G(s):= \frac{2b}{s}\int _0 ^s \left( 1-r/s \right)^{2b-1} \chi (r) \, dr \leq 1.
\end{equation}
\end{corollary}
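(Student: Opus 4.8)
The plan is to deduce \eqref{MPRequation} directly from the Colding--Minicozzi inequality \eqref{CMineq} of Lemma \ref{Lem:ColdingMinicozzi} by evaluating it on the radial test function $f(r)=(1-r/s)^b$ of \eqref{functionageq14} and invoking the nonpositivity of $L_a$. Since $L_a$ is nonpositive on $C^\infty_0(\Sigma)$ and $\int_\Sigma f L_a f=-I(f)$, we have $I(f)\geq 0$. The function $f(r)=(1-r/s)^b$ with $b\geq 1$ is nonnegative, radial, nonincreasing and vanishes for $r\geq s$, so it is admissible in Lemma \ref{Lem:ColdingMinicozzi}; I would apply that lemma and let $\eps\to 0^+$ (equivalently take $\eps=0$, which the statement permits). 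The boundary term is harmless: $l(\eps)\to 0$ while $f'(\eps)\to -b/s$ stays bounded, so $2a f'(\eps)l(\eps)\to 0$, and the lower limits of all one-dimensional integrals may be set to $0$ by continuity. Hence \eqref{CMineq} reduces to
$$0\leq I(f)\leq 2a\pi G(s)-\int_{D(s)} V f(r)^2+\int_0^s\big\{(1-2a)f'(r)^2-2a f(r)f''(r)\big\}l(r)\,dr ,$$
with $G(s)=-\int_0^s(f(r)^2)'\chi(r)\,dr$.

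Next comes the elementary but slightly delicate computation. With $f(r)=(1-r/s)^b$ one finds $f'(r)=-\tfrac bs(1-r/s)^{b-1}$, $f''(r)=\tfrac{b(b-1)}{s^2}(1-r/s)^{b-2}$, and in particular
$$(1-2a)f'(r)^2-2a f(r)f''(r)=\frac{b\big(b(1-4a)+2a\big)}{s^2}(1-r/s)^{2b-2} ;$$
note the product $f f''=\tfrac{b(b-1)}{s^2}(1-r/s)^{2b-2}$ is bounded even for $1<b<2$ (where $f''$ alone is singular at $r=s$) and vanishes identically for $b=1$. Likewise $(f^2)'=-\tfrac{2b}{s}(1-r/s)^{2b-1}$, so $G(s)=\tfrac{2b}{s}\int_0^s(1-r/s)^{2b-1}\chi(r)\,dr$, which is \eqref{G}. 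Substituting these expressions into the previous display and moving $\int_{D(s)}V f(r)^2$ to the left-hand side gives exactly \eqref{MPRequation}, since $f(r)^2=(1-r/s)^{2b}$.

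Finally, for the bound $G(s)\leq 1$: any geodesic disk $D(r)$ with $\overline{D(r)}\cap\partial\Sigma=\emptyset$ satisfies $\chi(r)\leq 1$ (a compact surface with nonempty boundary has Euler characteristic at most $1$). Therefore, substituting $u=1-r/s$,
$$G(s)\leq \frac{2b}{s}\int_0^s(1-r/s)^{2b-1}\,dr=2b\int_0^1 u^{2b-1}\,du=1 .$$
There is no deep obstacle here; the only points needing a little care are the limit $\eps\to 0$ together with the vanishing of the boundary term, the algebraic simplification of $(1-2a)f'^2-2a f f''$, and the use of $\chi(r)\leq 1$ for geodesic disks.
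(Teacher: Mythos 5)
Your proposal is correct and follows exactly the route the paper intends: the paper derives Corollary \ref{Cor:ageq14} precisely by plugging the test function $f(r)=(1-r/s)^b$ of \eqref{functionageq14} into Lemma \ref{Lem:ColdingMinicozzi} (with $\eps=0$) and using $I(f)\geq 0$. Your algebraic simplification of $(1-2a)f'^2-2aff''$, the computation of $G(s)$, and the bound $G(s)\leq 1$ via $\chi(r)\leq 1$ all check out.
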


If $a\leq 1/4$ (see \cite{ER}), we will work with the special radial function given
by

\begin{equation}\label{function}
f(r)=\left\{\begin{matrix}
1 & r \leq s e^{-s}  \\[3mm]
\left( \dfrac{\ln (s/r)}{s}\right) ^ b & s e^{-s} \leq r \leq s \\[3mm]
0 & r \geq s
\end{matrix}\right.
\end{equation}where $s>0$, $b\geq 1$ and $r$ is the radial distance of a point $p$ in $D(s)$ to
$p_0$. Now, we use the above test function \eqref{function} in Lemma
\ref{Lem:ColdingMinicozzi} (see \cite[Corollary 6.1 and Theorem 6.1]{ER} for
details).

\begin{corollary}\label{Cor:aleq14}
Let $\Sigma $ be a Riemannian surface with $k-$AAB, possibly with boundary and $K
\not \equiv 0$. Fix a point $p_0\in \Sigma$ and a positive number $ s > 0$ such that
$\overline{D(s)}\cap \partial \Sigma = \emptyset$. Suppose that the differential
operator $L_a = \Delta + V - a K$ is nonpositive on $ C^{\infty}_0 (\Sigma)$, where
$V \in C^{\infty}(\Sigma)$ and $0< a \leq 1/4$ is a constant. Let $b\geq 1$ so that
$2(b+1)\geq k$, then
\begin{equation}\label{estVpos}
\int _{D(s e^{-s})}  V  + \int _{D(s)\setminus D(s e^{-s})} \left(
\frac{\ln(s/r)}{s}\right) ^{2b} V  \leq
 2a \left(\pi + b\frac{2\pi - \mathcal{K}(se^{-s})}{s}\right) + \rho^{+}_{a,b}(\delta _0
 ,s),
\end{equation}where we denote
$$ \mathcal{K}(r_1) = {\rm min}_{[0,r_1]}\set{K(r)} , $$and $\rho ^{+}_{a,b}$ is a
function depending on $s$ so that
\begin{equation*}
\rho^{+}_{a ,b}(\delta_0 , s) \To \left\{ \begin{matrix}
0 & \text{for} & 2(b+1)> k \\
C^+ & \text{for} & 2(b+1)= k
\end{matrix}\right.  \, \text{ if } \, s \to + \infty,
\end{equation*}for a positive constant $C^+$. Moreover,
\begin{equation*}
\lim _{s\to \infty} \frac{\mathcal{K}(s e^{-s})}{s} = 0.
\end{equation*}
\end{corollary}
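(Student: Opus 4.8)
The plan is to insert the radial test function \eqref{function}, with the inner radius taken to be $\eps=se^{-s}$, into the Colding--Minicozzi inequality \eqref{CMineq} of Lemma \ref{Lem:ColdingMinicozzi}, and then to exploit the nonpositivity of $L_a$, which forces $I(f)\geq 0$. Moving the potential term to the left of \eqref{CMineq} gives
\begin{equation*}
\int_{D(s)} V f(r)^2 \leq 2a\pi G(s) - 2a f'(se^{-s})\, l(se^{-s}) + \int_{se^{-s}}^s \set{ (1-2a)f'(r)^2 - 2a f(r)f''(r) }\, l(r)\, dr .
\end{equation*}
Because $f\equiv 1$ on $D(se^{-s})$ and $f=\left(\ln(s/r)/s\right)^b$ on the annulus $D(s)\setminus D(se^{-s})$, the left-hand side coincides exactly with the left-hand side of \eqref{estVpos}. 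The proof then reduces to estimating the three terms on the right: the topological term $2a\pi G(s)$, the boundary term, and the annular integral.

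For the first term I would argue that $G(s)\leq 1$. By the definition in Lemma \ref{Lem:ColdingMinicozzi}, $G(s) = -\int_{se^{-s}}^s (f(r)^2)'\,\chi(r)\, dr$, and since $-(f(r)^2)'\,dr$ is a nonnegative measure of total mass $f(se^{-s})^2-f(s)^2 = 1$, while $\chi(r)\leq 1$ for geodesic disks about an interior point (any proper sub-disk has nonempty boundary), $G(s)$ is a convex average of the values $\chi(r)$ and hence $G(s)\leq 1$. This produces the summand $2a\pi$ in \eqref{estVpos}. For the boundary term, differentiating \eqref{function} at $r=se^{-s}$ gives $-f'(se^{-s}) = b e^{s}/s^2$; for $\rho$ small enough that $D(\rho)$ is a topological disk, the first variation of length together with Gauss--Bonnet yields $l(\rho) = 2\pi\rho - \int_0^\rho K(t)\, dt$, so using $K(t)\geq \mathcal{K}(se^{-s})$ on $[0,se^{-s}]$ one obtains $l(se^{-s})\leq se^{-s}\left(2\pi - \mathcal{K}(se^{-s})\right)$ and therefore
\begin{equation*}
-2a f'(se^{-s})\, l(se^{-s}) \leq 2a\, \frac{b}{s}\left(2\pi - \mathcal{K}(se^{-s})\right) ,
\end{equation*}
which is precisely the second summand in \eqref{estVpos}. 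The side claim $\lim_{s\to\infty}\mathcal{K}(se^{-s})/s = 0$ then follows because $se^{-s}\to 0$, whence $|\mathcal{K}(se^{-s})|\leq \sup_{D(se^{-s})}|K|\cdot a(se^{-s})\to 0$, and dividing by $s\to+\infty$ makes the quotient vanish.

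Finally I would define $\rho^{+}_{a,b}(\delta_0,s)$ to be the annular integral and extract its asymptotics; this is the main obstacle. Differentiating \eqref{function} twice and setting $u=\ln(s/r)$ gives the clean identity
\begin{equation*}
(1-2a)f'(r)^2 - 2a f(r)f''(r) = \frac{b(b(1-4a)+2a)}{s^2 r^2}\left(\frac{u}{s}\right)^{2b-2} - \frac{2ab}{s r^2}\left(\frac{u}{s}\right)^{2b-1} ,
\end{equation*}
whose leading coefficient $b(b(1-4a)+2a)$ is nonnegative since $a\leq 1/4$. Performing the substitution $r=se^{-u}$ converts $\rho^{+}$ into an integral over $u\in[0,s]$ carrying an overall factor $s^{\,k-2(b+1)}$ once one inserts the $k$-AAB growth $l(r)\sim Ck\, r^{k-1}$; the hypothesis $2(b+1)\geq k$ makes this power nonpositive. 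The exponential weights coming from $r=se^{-u}$ and from $l(se^{-u})\sim Ck s^{k-1}e^{-(k-1)u}$ combine so that the $u$-integrand decays like $u^{2b-2}e^{(2-k)u}$, integrable on $[0,\infty)$ in the relevant regime; hence $\rho^{+}\to 0$ when $2(b+1)>k$, while for $2(b+1)=k$ the power of $s$ is exactly zero and the rescaled integral converges to a finite positive constant $C^{+}$ (a multiple of $\int_0^\infty u^{2b-2}e^{-2bu}\,du$).

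The delicate point is that the $k$-AAB estimate for $l(r)$ holds only asymptotically, for large $r$, whereas the annular integral runs over all of $[se^{-s},s]$. The technical heart of the argument is therefore to split this interval, show that the small-radius portion contributes only lower-order terms, and verify that the dominant contribution comes from $r$ near $s$, where the area growth hypothesis controls $l(r)$. It is exactly at this step that the $k$-AAB assumption is used essentially, and where the borderline case $2(b+1)=k$ must be handled with care to identify the limiting constant $C^{+}$ rather than merely bounding it.
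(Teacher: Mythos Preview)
Your approach---inserting the radial test function \eqref{function} with $\eps=se^{-s}$ into the Colding--Minicozzi inequality \eqref{CMineq} and estimating the topological, boundary, and annular contributions separately---is precisely the route the paper indicates; the paper itself gives no independent argument and defers the computation to \cite[Corollary 6.1 and Theorem 6.1]{ER}. Your handling of $G(s)\leq 1$, of the boundary term via $l'(r)=2\pi\chi(r)-K(r)$, and of the derivative identity
\[
(1-2a)f'(r)^2-2af(r)f''(r)=\frac{b(b(1-4a)+2a)}{s^2r^2}\left(\frac{u}{s}\right)^{2b-2}-\frac{2ab}{sr^2}\left(\frac{u}{s}\right)^{2b-1}
\]
is correct and matches the argument in \cite{ER}.

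One point needs tightening. In the asymptotic analysis of $\rho^{+}_{a,b}$ you write $l(r)\sim Ck\,r^{k-1}$, but the $k$-AAB hypothesis (Definition \ref{Def:AAG}) is a statement about the \emph{area} $a(r)$, and asymptotic equivalence does not in general survive differentiation. The remedy, which is what \cite{ER} actually does, is to integrate the annular term by parts once, replacing $\int_{se^{-s}}^{s} g(r)\,l(r)\,dr$ by a boundary contribution plus $-\int_{se^{-s}}^{s} g'(r)\,a(r)\,dr$, and only then invoke $a(r)/r^{k}\to C$. After this adjustment your substitution $r=se^{-u}$ and the resulting power $s^{\,k-2(b+1)}$ are exactly right, and the dichotomy $2(b+1)>k$ versus $2(b+1)=k$ follows as you describe.
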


\begin{center}
{\bf Acknowledgement:}\\
{\it The author wishes to thank J. P\'{e}rez, A. Ros and H. Rosenberg for their interesting comments and help during the preparation of this work. Also, the author wishes to thanks to the referees for their corrections and insights for improving this paper.}
\end{center}

\end{document}